\definecolor{codegreen}{rgb}{0,0.6,0}
\definecolor{codegray}{rgb}{0.5,0.5,0.5}
\definecolor{codepurple}{rgb}{0.58,0,0.82}
\definecolor{backcolour}{rgb}{0.95,0.95,0.92}
\lstdefinestyle{mystyle}{
    backgroundcolor=\color{backcolour},   
    commentstyle=\color{codegreen},
    keywordstyle=\color{magenta},
    numberstyle=\tiny\color{codegray},
    stringstyle=\color{codepurple},
    basicstyle=\ttfamily\footnotesize,
    breakatwhitespace=false,         
    breaklines=true,                 
    captionpos=b,                    
    keepspaces=true,                 
    numbers=left,                    
    numbersep=5pt,                  
    showspaces=false,                
    showstringspaces=false,
    showtabs=false,                  
    tabsize=2
}
\begin{document}

\newcommand{\Mod}[1]{\ (\mathrm{mod}\ #1)}

\newtheorem*{note}{Note}
\newtheorem{problem}{Problem}
\newtheorem{theorem}{Theorem}
\newtheorem{lemma}[theorem]{Lemma}
\newtheorem{claim}[theorem]{Claim}
\newtheorem{corollary}[theorem]{Corollary}
\newtheorem{prop}[theorem]{Proposition}
\newtheorem{definition}{Definition}
\newtheorem{question}[theorem]{Question}
\newtheorem{conjecture}{Conjecture}
\def\cA{{\mathcal A}}
\def\cB{{\mathcal B}}
\def\cC{{\mathcal C}}
\def\cD{{\mathcal D}}
\def\cE{{\mathcal E}}
\def\cF{{\mathcal F}}
\def\cG{{\mathcal G}}
\def\cH{{\mathcal H}}
\def\cI{{\mathcal I}}
\def\cJ{{\mathcal J}}
\def\cK{{\mathcal K}}
\def\cL{{\mathcal L}}
\def\cM{{\mathcal M}}
\def\cN{{\mathcal N}}
\def\cO{{\mathcal O}}
\def\cP{{\mathcal P}}
\def\cQ{{\mathcal Q}}
\def\cR{{\mathcal R}}
\def\cS{{\mathcal S}}
\def\cT{{\mathcal T}}
\def\cU{{\mathcal U}}
\def\cV{{\mathcal V}}
\def\cW{{\mathcal W}}
\def\cX{{\mathcal X}}
\def\cY{{\mathcal Y}}
\def\cZ{{\mathcal Z}}

\def\A{{\mathbb A}}
\def\B{{\mathbb B}}
\def\C{{\mathbb C}}
\def\D{{\mathbb D}}
\def\E{{\mathbb E}}
\def\F{{\mathbb F}}
\def\G{{\mathbb G}}
\def\I{{\mathbb I}}
\def\J{{\mathbb J}}
\def\K{{\mathbb K}}
\def\L{{\mathbb L}}
\def\M{{\mathbb M}}
\def\N{{\mathbb N}}
\def\O{{\mathbb O}}
\def\P{{\mathbb P}}
\def\Q{{\mathbb Q}}
\def\R{{\mathbb R}}
\def\S{{\mathbb S}}
\def\T{{\mathbb T}}
\def\U{{\mathbb U}}
\def\V{{\mathbb V}}
\def\W{{\mathbb W}}
\def\X{{\mathbb X}}
\def\Y{{\mathbb Y}}
\def\Z{{\mathbb Z}}

\def\ep{{\mathbf{e}}_p}
\def\em{{\mathbf{e}}_m}
\def\eq{{\mathbf{e}}_q}

\def\scr{\scriptstyle}
\def\\{\cr}
\def\({\left(}
\def\){\right)}
\def\[{\left[}
\def\]{\right]}
\def\<{\langle}
\def\>{\rangle}
\def\fl#1{\left\lfloor#1\right\rfloor}
\def\rf#1{\left\lceil#1\right\rceil}
\def\le{\leqslant}
\def\ge{\geqslant}
\def\eps{\varepsilon}
\def\mand{\qquad\mbox{and}\qquad}

\def\sssum{\mathop{\sum\ \sum\ \sum}}
\def\ssum{\mathop{\sum\, \sum}}
\def\ssumw{\mathop{\sum\qquad \sum}}

\def\vec#1{\mathbf{#1}}
\def\inv#1{\overline{#1}}
\def\num#1{\mathrm{num}(#1)}
\def\dist{\mathrm{dist}}

\def\fA{{\mathfrak A}}
\def\fB{{\mathfrak B}}
\def\fC{{\mathfrak C}}
\def\fU{{\mathfrak U}}
\def\fV{{\mathfrak V}}

\newcommand{\bflambda}{{\boldsymbol{\lambda}}}
\newcommand{\bfxi}{{\boldsymbol{\xi}}}
\newcommand{\bfrho}{{\boldsymbol{\rho}}}
\newcommand{\bfnu}{{\boldsymbol{\nu}}}

\def\GL{\mathrm{GL}}
\def\SL{\mathrm{SL}}

\def\Hba{\overline{\cH}_{a,m}}
\def\Hta{\widetilde{\cH}_{a,m}}
\def\Hb1{\overline{\cH}_{m}}
\def\Ht1{\widetilde{\cH}_{m}}

\def\flp#1{{\left\langle#1\right\rangle}_p}
\def\flm#1{{\left\langle#1\right\rangle}_m}
\def\dmod#1#2{\left\|#1\right\|_{#2}}
\def\dmodq#1{\left\|#1\right\|_q}

\def\Zm{\Z/m\Z}

\def\Err{{\mathbf{E}}}

\newcommand{\comm}[1]{\marginpar{%
\vskip-\baselineskip 
\raggedright\footnotesize
\itshape\hrule\smallskip#1\par\smallskip\hrule}}

\def\xxx{\vskip5pt\hrule\vskip5pt}

\newenvironment{nouppercase}{%
  \let\uppercase\relax%
  \renewcommand{\uppercasenonmath}[1]{}}{}
  

\title{Medium-sized values for the Prime Number Theorem for primes in arithmetic progression}
\author{Matteo Bordignon\\ University of New South Wales Canberra, School of Science \\ m.bordignon@student.unsw.edu.au }

\date{\today
}


\begin{abstract}
We give two improved explicit versions of the prime number theorem for primes in arithmetic progression: the first isolating the contribution of the Siegel zero and the second completely explicit, where the improvement is for medium-sized values. This will give an improved explicit Bombieri--Vinogradov like result for non-exceptional moduli.
 \end{abstract}
\begin{nouppercase}
\maketitle
\end{nouppercase}
\section{Introduction}
The prime number theorem for primes in arithmetic progression (PNTPAP) states 
\begin{equation*}
\psi(x;q,a)=\frac{x}{\varphi(q)}+o_q(x),
\end{equation*}
and the strength of the result lies in the explicit value of $o(x)$ and how this depends on the range of $q$. The best known result is the one with the error term due to Siegel--Walfisz that is uniform for $q \le (\log x)^A$, for any $A\ge 0$, but this result can not be made explicit since the proof is ineffective.
The classical explicit versions of the PNTPAP are the following. The first is by McCurley that in \cite{McCurley1} obtains an explicit result for non-exceptional moduli and in \cite{McCurley2} focuses on the case where $q=3$. Improving \cite{McCurley2}, Ramaré and Rumley prove in \cite{R-R} explicit results for $q\le 72$ and other small moduli. 
A result for large moduli is obtained by Liu and Wang in \cite{Liu-Wang}, they prove, for $q \le \log^6 x$, a version of the PNTPAP with an explicit error term of size $\frac{x}{\log^{13} x}$. Dusart, in \cite{Dusart}, obtains an explicit error term that is of size $o \left(\frac{x}{\log^A x} \right) $, for any $A>0$, note that while this result improves \cite{Liu-Wang} for large $x$, it is worse for medium-sized values, that is somewhere in the range $10^2 \le \log x \le 10^6$. Dusart also improves the result in \cite{R-R} for $q=3$. Yamada in \cite{Yamada} (unpublished) proves a generalized version of the result in \cite{Liu-Wang}, where for multiple small to medium-sized $A$ and for $a\le \log^A x$, he isolates the contribution of the Siegel zero and obtains an error term of size $\frac{x}{\log^{A-2} x}$. Note that this result is better than the one in \cite{Dusart} for medium-sized values, aside for the non-explicit contribution of the Siegel zero. Yamada also uses this result, joint with \cite{Amir}, to obtain an explicit version of a Bombieri--Vinogradov style theorem for non-exceptional moduli. The last explicit version of the PNTPAP is the one by Bennett et al.\ in \cite{Bennett}, here they improve the previous results for $3 \le p \le 4\cdot 10^5$.\newline
In this paper we will focus on a version of the PNTPAP for medium-sized $x$. In doing this we will draw inspiration from \cite{Yamada} and we will first obtain an improved explicit version with isolated the contribution of the Siegel zero, see Theorem \ref{theo:1}. To obtain this result it is fundamental to obtain, with $\chi$ a Dirichlet character modulo $q$,  the best error term in
\begin{equation}
\label{eq:in}
\psi(x,\chi)=\sum_{\rho \in z(\chi),|\gamma|\le T} \frac{x^{\rho}}{\rho}+R(x) \frac{x \log x}{T},
\end{equation}
see Section \ref{sec:pr} for the definition of $z(\chi)$.
Setting $T=\log^A x$, with $0<A\ll 1$, Yamada, drawing inspiration from \cite{Liu-Wang}, proved explicitly that $R(x)\ll \log x$. In Lemma \ref{lemma:CW1} we improve this result by proving an explicit version of the result by Goldsot in \cite{Goldston}, to obtain $R(x)\ll \log \log x$. In doing this we reshape the proof of Goldston to obtain a better explicit upper bound. Note that all the results we obtain are as general as possible to make them useful in different ranges of $x$ and for different choices of $A$. We then use Theorem \ref{theo:1}, together with the explicit bound on the Siegel zeroes in \cite{Bordignon1} and \cite{Bordignon2}, to obtain a completely explicit version of the PNTPAP, see Theorem \ref{theo:2}, that improves the previous results for medium-sized values. We conclude the paper using Theorem \ref{theo:1} improving the Bombieri--Vinogradov style theorem for non-exceptional moduli in \cite{Yamada}, the result we prove is also more general. \newline 
We now better introduce the three main results.
We start with a result on zeroes on Dirichlet $L$-functions, namely Theorem 1.1 and 1.3 of \cite{Kadiri}.
\begin{theorem}
\label{theo:kadiri}
Define $\prod (s,q)=\prod_{\chi \pmod q} L(s,\chi)$, $R_0=6.3970$ and $R_1=2.0452$. Then the function $\prod(s,q)$ has at most one zero $\rho=\beta+i \gamma$, in the region $\beta \ge 1-1/R_0\log \max \lbrace q,q|\gamma|\rbrace$. Such  zero is called Siegel zero and if it exists, then it must be real, simple and must correspond to a non-principal real character $\chi \pmod q$. Moreover, for any given $Q_1$, between all the zeroes with $q \le Q_1$ there is at most one such zero with $\beta \ge 1-1/2R_1 \log Q_1$.
\end{theorem}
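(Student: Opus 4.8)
Since Theorem~\ref{theo:kadiri} is quoted verbatim from \cite{Kadiri} (Theorems~1.1 and~1.3 there), I would not reprove it; but since the numerical values $R_0$ and $R_1$ propagate into every constant below, let me outline the argument one follows. The plan is the de la Vall\'ee Poussin zero-free region, made explicit. Fix a hypothetical zero $\rho = \beta + i\gamma$ of some $L(s,\chi)$ with $\beta$ near $1$, and fix a non-negative trigonometric polynomial $P(\theta) = \sum_{k=0}^{K} b_k\cos(k\theta)$ with $b_0 > 0$, $b_1 > 0$ and $b_k \ge 0$; the toy choice $3 + 4\cos\theta + \cos 2\theta$ recovers McCurley's constant, and a longer, numerically optimized $P$ is what yields the sharper $R_0 = 6.3970$. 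Since $\chi(n)\,n^{-i\gamma} = e^{i\phi_n}$ with $\phi_n \in \R$ whenever $(n,q) = 1$, feeding $P$ into the Dirichlet series of $-L'/L$ gives, for every $\sigma > 1$,
\begin{equation*}
0 \le \sum_{k=0}^{K} b_k\,\Re\!\left( -\frac{L'}{L}(\sigma + ik\gamma,\,\chi^k) \right),
\end{equation*}
the $k = 0$ term being essentially $b_0\bigl(-\tfrac{\zeta'}{\zeta}(\sigma)\bigr)$.

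Next I would insert the Hadamard/partial-fraction expansion of each $-L'/L(s,\chi^k)$ and sort the terms into three groups: a pole $\tfrac{b_0}{\sigma-1}$ (with the finitely many Euler factors at $p \mid q$ corrected for, plus any other $k$ with $\chi^k$ principal); the contribution $-\tfrac{b_1}{\sigma-\beta}$ of the target zero in the $k = 1$ block, all other zeros $\rho'$ being discarded because $-\Re\tfrac{1}{\sigma + ik\gamma - \rho'} \le 0$ for $\sigma > 1 > \beta'$; and Gamma-factor terms together with $\Re B(\chi^k)$, all bounded explicitly by $c\log\bigl(q\max\{1,|\gamma|\}\bigr)$. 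Rearranging yields $\tfrac{b_1}{\sigma-\beta} \le \tfrac{b_0}{\sigma-1} + c\log\bigl(q\max\{1,|\gamma|\}\bigr)$; taking $\sigma - 1 = \lambda/\log\bigl(q\max\{1,|\gamma|\}\bigr)$ and optimizing $\lambda$ jointly with the $b_k$ forces $1 - \beta \ge 1/R_0\log\bigl(q\max\{1,|\gamma|\}\bigr)$. The exceptional zero falls out of the same inequality: a second zero in the region, or one of multiplicity $\ge 2$, would make the left-hand side too large to absorb, so there is at most one; symmetry under $\gamma \mapsto -\gamma$ then forces it to be real, and running the argument with $\zeta(s)L(s,\chi)L(s,\overline{\chi})$ rules out a complex $\chi$, so it must be simple and attached to a real non-principal character. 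For the $R_1$-statement one repeats everything with $\prod_{q\le Q_1}\prod_{\chi\bmod q}L(s,\chi)$, so that $\log(q\max\{1,|\gamma|\})$ is replaced throughout by $\log Q_1$ and the set of moduli $\le Q_1$ carrying a zero that close to $1$ is shown to be a singleton (the explicit Landau--Page step).

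The main obstacle will be entirely quantitative, and it is exactly what \cite{Kadiri} works out: one needs a sharp explicit smoothed zero count $\sum_{|\gamma' - t|\le 1} 1 \le c_1\log\bigl(q(|t|+2)\bigr) + c_2$ to dispose of the unwanted zeros, explicit bounds on $\Re\tfrac{\Gamma'}{\Gamma}$ and on $\Re B(\chi)$ near $s = 1$, the convex optimization that selects $\{b_k\}$ and $\lambda$, and careful bookkeeping of the Euler factors at $p \mid q$. Each ingredient is classical, but pushing the constants all the way down to $6.3970$ and $2.0452$ is the delicate part.
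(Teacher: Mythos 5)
Your treatment matches the paper's: Theorem~\ref{theo:kadiri} is simply imported from Theorems~1.1 and~1.3 of \cite{Kadiri}, and no proof is given in the text, so deferring to that reference is exactly what the author does. Your sketch of the explicit de la Vall\'ee Poussin argument (non-negative trigonometric polynomial, partial fractions, Landau--Page uniqueness) is a fair description of the underlying method, with the minor caveat that Kadiri's actual optimization runs through smoothed (Stechkin/Heath-Brown-type) weights rather than the bare classical inequality, but since neither you nor the paper reprove the result this makes no difference here.
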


We are interested in an intermediate explicit result to the PNTPAP, that isolates the possible contribute due to the Siegel zero. In this paper we aim to improve Theorem 1.1 in \cite{Yamada}, we will do so in the following result.
\begin{theorem}
\label{theo:1}
Let $\alpha_1$, $\alpha_2 \in \mathbb{R}^{+}$, $Y_0=\log\log X_0$, $ Y_0 \ge \max \left\{\frac{11 \log 10}{\alpha_1+\alpha_2+3}, 2\right\}$ and $C(\alpha_1,\alpha_2,Y_0)$ be a constant given in \eqref{eq:C}. Let $q \le \log^{\alpha_1} x$. Let $E_0=1$ if $\beta_0$, the possible Siegel zero modulo $q$, exists and $E_0=0$ otherwise. If $(a,q)=1$ and $x \ge X_0 $, then
\begin{equation}
\label{eq:1}
-1+x^{-1}\sum_{\chi \pmod{q}}|\psi(x, \chi)| <\frac{C(\alpha_1,\alpha_2,Y_0)}{\log^{\alpha_2}x}+E_0\frac{x^ {\beta_0-1}}{\beta_0}
\end{equation}
and 
\begin{equation}
\label{eq:2}
\frac{\varphi(q)}{x}\left| \psi(x;q,a)-\frac{x}{\varphi(q)}\right|<\frac{C(\alpha_1,\alpha_2,Y_0)}{\log^{\alpha_2}x}+E_0\frac{x^ {\beta_0-1}}{\beta_0}.
\end{equation}
\end{theorem}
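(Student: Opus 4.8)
The plan is to start from the explicit formula \eqref{eq:in} for $\psi(x,\chi)$ with the cutoff $T=\log^{\alpha}x$ for a suitable $\alpha$ (to be chosen in terms of $\alpha_1$ and $\alpha_2$), using Lemma \ref{lemma:CW1} to replace $R(x)$ by a term of size $O(\log\log x)$, so that the tail contribution is $\ll x (\log\log x)(\log x)/T = x(\log\log x)(\log x)^{1-\alpha}$. Summing over all $\chi \pmod q$ and using $q\le \log^{\alpha_1}x$ shows this aggregated tail is acceptable once $\alpha$ is taken large enough relative to $\alpha_1+\alpha_2$. The main term $\sum_{\rho\in z(\chi),\,|\gamma|\le T} x^\rho/\rho$ must then be estimated: the contribution of the principal character's trivial-type considerations is already incorporated in the ``$-1$'' on the left-hand side of \eqref{eq:1} (since $\psi(x,\chi_0)$ contributes the main term $x$), so what remains is to bound $x^{-1}\sum_\chi \big|\sum_\rho x^\rho/\rho\big|$ over nontrivial zeros.

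Next I would split the zero sum into the possible Siegel zero $\beta_0$ (contributing exactly the isolated term $E_0 x^{\beta_0-1}/\beta_0$ on the right-hand side) and all remaining zeros, which by Theorem \ref{theo:kadiri} satisfy $\beta\le 1-1/(R_0\log\max\{q,q|\gamma|\})$. For these, I would use $|x^\rho/\rho|\le x^\beta/|\gamma|$ (or $x^\beta$ for small $|\gamma|$) together with the zero-density / zero-counting information — the number of zeros of $L(s,\chi)$ with $|\gamma|\le t$ being $\ll \log(q t)$ per character, and $\ll t\log(qt)$ over all characters mod $q$ — to convert the sum over zeros into an integral. The zero-free region bound gives $x^\beta\le x\exp(-\log x/(R_0\log(qT)))$, and with $q\le\log^{\alpha_1}x$, $T=\log^\alpha x$ we get $\log(qT)\ll \log\log x$, hence $x^{\beta-1}\le \exp(-c\log x/\log\log x)$, which is far smaller than any power of $\log x$. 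Carrying out the dyadic summation over $|\gamma|$ then produces a bound of the shape $\exp(-c\log x/\log\log x)\cdot(\log x)^{O(1)}$, which is absorbed into $C(\alpha_1,\alpha_2,Y_0)/\log^{\alpha_2}x$ provided $X_0$ (equivalently $Y_0=\log\log X_0$) is large enough — this is exactly the role of the hypothesis $Y_0\ge\max\{11\log 10/(\alpha_1+\alpha_2+3),\,2\}$ and the definition of $C$ in \eqref{eq:C}. The constant $C$ would be assembled by tracking every implied constant: the constant from Lemma \ref{lemma:CW1}, the constant $R_0=6.3970$ from Theorem \ref{theo:kadiri}, the explicit zero-counting constants, and the dyadic-sum bookkeeping.

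For \eqref{eq:2}, the passage from \eqref{eq:1} is the standard orthogonality step: writing
\begin{equation*}
\psi(x;q,a)-\frac{x}{\varphi(q)}=\frac{1}{\varphi(q)}\sum_{\chi\pmod q}\bar\chi(a)\big(\psi(x,\chi)-\delta_\chi x\big),
\end{equation*}
where $\delta_\chi=1$ iff $\chi=\chi_0$, and then taking absolute values:
\begin{equation*}
\frac{\varphi(q)}{x}\Big|\psi(x;q,a)-\frac{x}{\varphi(q)}\Big|\le \frac{1}{x}\sum_{\chi\pmod q}\big|\psi(x,\chi)-\delta_\chi x\big|\le -1+\frac{1}{x}\sum_{\chi\pmod q}|\psi(x,\chi)|,
\end{equation*}
the last inequality because $|\psi(x,\chi_0)-x|\ge x-|\psi(x,\chi_0)|$ is the wrong direction — more precisely one uses $\big|\psi(x,\chi_0)-x\big| = \big|x - \psi(x,\chi_0)\big|$ and $\sum_{\chi\ne\chi_0}|\psi(x,\chi)| = \sum_\chi|\psi(x,\chi)| - |\psi(x,\chi_0)| \le \sum_\chi|\psi(x,\chi)| - (2\psi(x,\chi_0)-x) $... so in fact one bounds $\sum_\chi|\psi(x,\chi)-\delta_\chi x| \le |\psi(x,\chi_0)-x| + \sum_{\chi\ne\chi_0}|\psi(x,\chi)|$ and notes this is $\le x\big(-1+x^{-1}\sum_\chi|\psi(x,\chi)|\big)$ whenever $\psi(x,\chi_0)\le x$, with the complementary case handled directly; thus \eqref{eq:2} follows from \eqref{eq:1} with the same right-hand side.

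The main obstacle I expect is the bookkeeping in the second paragraph: getting an \emph{explicit} and reasonably small constant $C(\alpha_1,\alpha_2,Y_0)$ out of the dyadic decomposition of the zero sum while keeping the hypotheses on $Y_0$ as weak as stated. This requires a careful explicit zero-counting estimate for $L(s,\chi)$ valid for all the relevant moduli and heights, a clean explicit form of the Kadiri zero-free region, and a delicate comparison showing that $\exp(-c\log x/\log\log x)(\log x)^{O(1)}\le C/\log^{\alpha_2}x$ already at $x=X_0$ — which is where the somewhat mysterious constant $11\log 10/(\alpha_1+\alpha_2+3)$ enters. The improvement over Yamada's $R(x)\ll\log x$ to $R(x)\ll\log\log x$ via Lemma \ref{lemma:CW1} is what makes the $T$ needed (and hence $\log(qT)$, and hence the zero-free region savings) small enough to win in the medium-sized range, so threading that quantitative gain through all the estimates without losing it to crude bounds is the delicate part.
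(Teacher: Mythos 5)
Your first paragraph matches the paper's strategy (Theorem \ref{the:func}/Lemma \ref{lemma:main}, built on Lemma \ref{lemma:CW1}, with $T=\log^{\alpha_1+\alpha_2+3}x$), but your second paragraph has a genuine gap: you treat \emph{all} non-Siegel zeros with only Kadiri's zero-free region (Theorem \ref{theo:kadiri}) plus zero counting, and then argue that $x^{\beta-1}\le\exp\bigl(-c\log x/\log\log x\bigr)$ is ``far smaller than any power of $\log x$''. That absorption is purely asymptotic and fails in the medium-sized range the theorem is about. Concretely, with $\alpha_1=\alpha_2=1$ and the smallest admissible $Y_0\approx5.6$ one has $\log(qT)\le 6\log\log x$ and the zero-free-region decay is only $x^{-1/(R_0\log qT)}\approx e^{-1.3}\approx0.28$, while the aggregated zero count over all $\chi\pmod q$ up to $T=\log^5x$ contributes a factor of size roughly $q\log T\log(qT)\gg10^5$; multiplying back by $\log^{\alpha_2}x$ gives a ``constant'' many orders of magnitude above the tabulated values of $C$, so your route does not prove the statement with the constant $C(\alpha_1,\alpha_2,Y_0)$ of \eqref{eq:C}. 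The paper's proof avoids this by splitting the non-Siegel zeros into $z_0$ (those with $\beta\le1-1/(R\log qT)$ for an effective $R=R_{i,J}\approx2$--$4$, much smaller than $R_0=6.397$, handled by zero counting as in \eqref{eq:s0}) and the thin class $z_1$ of zeros with $1/R_0\le(1-\beta)\log qT\le1/R$, which is controlled \emph{not} by counting but by the Liu--Wang explicit data on the lowest $\lambda_n$ (Lemma \ref{lemma:LW}, Tables \ref{LW1}--\ref{LW3}) combined with the pointwise bound $\mu(\lambda)$ of Lemma \ref{lemma:mu}. This is the key idea missing from your proposal, and it also explains the hypothesis you found mysterious: $Y_0\ge\frac{11\log10}{\alpha_1+\alpha_2+3}$ is exactly the condition $qT\ge10^{11}$ needed to invoke Liu--Wang, not a condition for $\exp(-c\log x/\log\log x)$ to beat $\log^{-\alpha_2}x$ at $x=X_0$.

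A secondary point: the paper deduces the equivalence of \eqref{eq:1} and \eqref{eq:2} from Theorem 3.6 of McCurley rather than by your orthogonality computation, and your patched inequality $\sum_{\chi}|\psi(x,\chi)-\delta_\chi x|\le -x+\sum_\chi|\psi(x,\chi)|$ is asserted ``whenever $\psi(x,\chi_0)\le x$'', which is the wrong direction: in that case $|\psi(x,\chi_0)-x|=x-\psi(x,\chi_0)\ge\psi(x,\chi_0)-x$, so the step as written fails precisely in the typical case. This part is fixable (and is anyway outsourced in the paper), but the treatment of the near-boundary zeros is the substantive missing ingredient.
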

See Table \ref{tab:cn} for some explicit upper bounds for $C(\alpha_1,\alpha_2,Y_0)$.
Note again that, for medium-sized $x$, we obtain an improvement of size $\log \log x / \log x$ on Yamada's result in \cite{Yamada}. 
From Theorem \ref{theo:1}, using the results \cite{Bordignon1} and \cite{Bordignon2} to control the size of the Siegel zeroes, we obtain a completely explicit version of the prime number theorem for primes in arithmetic progression.
\begin{theorem}
\label{theo:app}
Let $\alpha_1$, $\alpha_2 \in \mathbb{R}^{+}$, $\alpha_1 < 2$, $Y_0=\log\log X_0$, $ Y_0 \ge \max \left\{\frac{11 \log 10}{\alpha_1+\alpha_2+3},2 \right\}$, $q  \le \log^{\alpha_1} x$ and  
\begin{equation*}
C_1(\alpha_1,\alpha_2,X_0)= 2\exp \left(-\frac{100 (\log x)^{1-\alpha_1/2}}{ (\alpha_1\log \log x)^2}\right)\log^{\alpha_2}x+C(\alpha_1,\alpha_2,Y_0).
\end{equation*}
We have
\begin{equation*}
\left| \psi(x;q,a)-\frac{x}{\varphi(q)}\right|<\frac{C_1(\alpha_1,\alpha_2,X_0)x}{\varphi(q) \log^{\alpha_2}x}.
\end{equation*}
\end{theorem}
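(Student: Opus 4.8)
The plan is to derive Theorem~\ref{theo:app} from Theorem~\ref{theo:1} by substituting an explicit upper bound for the contribution of the Siegel zero term $E_0 x^{\beta_0-1}/\beta_0$ in \eqref{eq:2}. The left-hand side of \eqref{eq:2} is exactly $\frac{\varphi(q)}{x}\left|\psi(x;q,a)-\frac{x}{\varphi(q)}\right|$, so after multiplying \eqref{eq:2} through by $x/\varphi(q)$ one obtains
\begin{equation*}
\left|\psi(x;q,a)-\frac{x}{\varphi(q)}\right| < \frac{x}{\varphi(q)}\left(\frac{C(\alpha_1,\alpha_2,Y_0)}{\log^{\alpha_2}x} + E_0\frac{x^{\beta_0-1}}{\beta_0}\right),
\end{equation*}
and it remains only to show $E_0 x^{\beta_0-1}/\beta_0 \le 2\exp\!\left(-100(\log x)^{1-\alpha_1/2}/(\alpha_1\log\log x)^2\right)$, which would give precisely $C_1(\alpha_1,\alpha_2,X_0)/(\varphi(q)\log^{\alpha_2}x)$ as the bound. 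If $E_0=0$ the term vanishes and there is nothing to prove, so assume the Siegel zero $\beta_0$ modulo $q$ exists.

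First I would invoke the explicit zero-free region for Siegel zeroes from \cite{Bordignon1} and \cite{Bordignon2}: those papers give a lower bound of the form $1-\beta_0 \gg q^{-1/2}(\log q)^{-2}$ (up to an explicit constant), valid for the real zero attached to a real non-principal character modulo $q$. Since $q \le \log^{\alpha_1}x$ with $\alpha_1<2$, substituting this range of $q$ turns the bound on $1-\beta_0$ into something of the form $1-\beta_0 \gg (\log x)^{-\alpha_1/2}(\log\log x)^{-2}$, with the numerical constant $100$ (and the constraint $\alpha_1<2$, which guarantees the exponent $1-\alpha_1/2$ is positive) chosen to absorb the explicit constants from \cite{Bordignon1}, \cite{Bordignon2}. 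Then I would use $x^{\beta_0-1} = e^{-(1-\beta_0)\log x} \le \exp\!\left(-100(\log x)^{1-\alpha_1/2}/(\alpha_1\log\log x)^2\right)$ and bound $1/\beta_0 \le 2$ (since $\beta_0 \ge 1/2$, which follows from the zero-free region once $X_0$, hence $x$, is large enough — this is exactly where the hypothesis $Y_0 = \log\log X_0 \ge 2$ is used to make $\log\log x$ large enough that $q^{-1/2}(\log q)^{-2}$, and thus $1-\beta_0$, is at most $1/2$). Combining these gives $E_0 x^{\beta_0-1}/\beta_0 \le 2\exp\!\left(-100(\log x)^{1-\alpha_1/2}/(\alpha_1\log\log x)^2\right)$, which is the required estimate.

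The main obstacle is purely bookkeeping: verifying that the explicit constant in the Siegel-zero lower bounds of \cite{Bordignon1} and \cite{Bordignon2}, once combined with the condition $q\le\log^{\alpha_1}x$ and the lower bound on $X_0$ coming from $Y_0\ge\max\{11\log 10/(\alpha_1+\alpha_2+3),2\}$, is compatible with the clean constant $100$ and the form $(\alpha_1\log\log x)^2$ appearing in $C_1$. One has to be slightly careful that the bound from \cite{Bordignon1},\cite{Bordignon2} is stated in terms of $q$ itself rather than $\log^{\alpha_1}x$, and that replacing $q$ by its maximal value $\log^{\alpha_1}x$ only \emph{weakens} the lower bound on $1-\beta_0$ (since $t\mapsto t^{-1/2}(\log t)^{-2}$ is decreasing), so the substitution goes in the right direction. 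No new analytic input beyond Theorem~\ref{theo:1} and the cited Siegel-zero bounds is needed; the whole proof is a short chain of inequalities.
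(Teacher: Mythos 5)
Your approach is exactly the paper's (which gives no written proof beyond noting that Theorem \ref{theo:app} follows from Theorem \ref{theo:1} and the Siegel-zero bound): multiply \eqref{eq:2} by $x/\varphi(q)$ and bound the exceptional term via Lemma \ref{lemma:B}, which already contains the explicit constant, so that $1-\beta_0\ge 100/(\sqrt{q}\log^2 q)\ge 100(\log x)^{-\alpha_1/2}(\alpha_1\log\log x)^{-2}$ for $q\le\log^{\alpha_1}x$ (using $\alpha_1<2$ and the monotonicity you note), giving precisely the exponential factor in $C_1$ with nothing left to ``absorb''. The one slip is your justification of $1/\beta_0\le 2$: the quantity $100/(\sqrt{q}\log^2 q)$ is a \emph{lower} bound on $1-\beta_0$, so making it small cannot force $\beta_0\ge 1/2$; instead observe that a Siegel zero lies by definition in the region of Theorem \ref{theo:kadiri} with $\gamma=0$, hence $\beta_0\ge 1-1/(R_0\log q)\ge 1-1/(R_0\log 3)>0.85$, which yields $1/\beta_0<2$ immediately and requires neither largeness of $x$ nor the hypothesis $Y_0\ge 2$.
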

See Table \ref{tab:cn1} for some explicit upper bounds for $C_1(\alpha_1,\alpha_2,X_0)$.
\begin{table}[H]
    \begin{tabular}{ | l | l | l | l | }
    \hline
     $Y_0$ & $\alpha_1$ & $\alpha_2$ & $C_1 $ \\  
    \hline
     $6.2$ & $1$ & $1$ & $1.7 \cdot 10^{-4}$ \\
     \hline
     $ 7$ & $ 1$ & $ 2 $ & $3.4\cdot 10^{-5} $  \\
     \hline
     $7.6 $ & $1 $ & $3 $ & $1.1\cdot 10{-5} $ \\
     \hline
     $8.8 $ & $1 $ & $ 10$ & $1.3 \cdot 19^{-6} $ \\
     \hline
     $7.6 $ & $1.2 $ & $1 $ & $1.1\cdot 10^{-5} $ \\
     \hline
     $8.1 $ & $1.2 $ & $2 $ & $4.4 \cdot 10^{-5}$ \\
     \hline
    \end{tabular}
    \quad
    \begin{tabular}{ | l | l | l | l |  }
    \hline
     $Y_0$ & $\alpha_1$ & $\alpha_2$ & $C_1  $ \\  
    \hline
    $10.5 $ & $1.2 $ & $3 $ & $ 5.5\cdot 10^{-5}$ \\
     \hline
     $13.2 $ & $1.2 $ & $7 $ & $ 9.5\cdot 10^{-6}$ \\
     \hline
     $ 11$ & $1.3 $ & $ 1$ & $ 1.3\cdot 10^{-5}$ \\
     \hline
     $13.6 $ & $1.3 $ & $2 $ & $7.9 \cdot 10^{-5}$ \\
     \hline
     $ 15$ & $1.3 $ & $3 $ & $1.2 \cdot 10^{-2} $ \\
     \hline
     $12 $ & $1.4 $ & $1 $ & $0.77$ \\
     \hline
    \end{tabular}
    \quad

\caption{Upper bound for $C_1(x,\alpha_1,\alpha_2,X_0)$}  
\label{tab:cn1}   
\end{table}
Note that the bound $\alpha_1 <2$ is needed to bound effectively the size of the Siegel zero. It would also be interesting to choose a constant upper bound for $q$. If we need an upper bound independent or $\varphi(q)$ we can easily obtain it using Theorem 15 in \cite{R-S}.
For medium-sized $x$ this improves the results in \cite{Bennett}, see for example Theorems 1.1, 1.2, 1.3 and Lemma 6.10.\newline
Now, for a given $Q_1$, we call a modulus $q_0 \le Q_1$ to be exceptional up to $Q_1$ if $\prod (s,q_0)$ has a zero with $\beta \ge 1-1/2R_1 \log Q_1$, with $R_1$ defined in Theorem \ref{theo:kadiri}. We then use Theorem \ref{theo:1} and Theorem 1.2 in \cite{Amir} to improve Theorem 1.4 in \cite{Yamada}, while taking care in giving a more general version of the result. This result is an explicit Bombieri--Vinogradov like theorem where the sum is restricted to non-exceptional moduli.
\begin{theorem}
\label{theo:2}
Let $A\in \mathbb{R}^{+}$ be such that $A>3$,
\begin{equation*}
c_0= \frac{2^{\frac{13}{2}}\left( 2+\log (\log 2/\log (4/3))\right)}{9\pi (\log 2)^2} \left(\frac{1}{3}+\frac{3}{2\log 2} \right)\sqrt{\frac{\psi(113)}{113}}
\end{equation*}
and
\begin{equation*}
c_1=\prod_p\left( 1+\frac{1}{p(p-1)}\right).
\end{equation*}
Let $Q=\frac{x^{1/2}}{\log^A x}$ and $1\le Q_1 \le \log^Ax$. Let $q_0$ denote the exceptional modulus up to $Q_1$ if it exists. If $\log \log x \ge \max \{7, \frac{11\log 10}{2A} \}$, then we have the inequality 
\begin{equation*}
\sum_{q \le Q, q_0 \nmid q} \max_{a \Mod{q}} \left|\psi(x;q,a)-\frac{x}{\varphi(q)} \right|<
\end{equation*}
\begin{equation*}
\sqrt{x}+\frac{c_1c_02x}{\log^{A-\frac{9}{2}}x}+\frac{2c_1c_0x\log^{\frac{9}{2}}x}{Q_1}
+\frac{c_1^2xC(A,A-3,X_0)(1+A\log\log x)\log x}{2\log^{A-4}x}+ E(x,A),
\end{equation*}
with $C( A,A-3,X_0)$ from Theorem \ref{theo:1}, and
\begin{align*}
&E(x,A)=(\log \left(x\log^{A} x \right) )^2\frac{\sqrt{x}}{4\log^{A-2} x}+\frac{c_1^2(1+A\log\log x)\log x}{2}\cdot \\ &\cdot\left( \frac{x^{1-\frac{1}{2AR_1 \log \log x}}}{1-\frac{1}{2AR_1 \log \log x}}+34x(\log x)^{1.52}\exp \left(-0.81\sqrt{\log x} \right) +\frac{\log q}{\log 2}\right)+ \\ & +\frac{c_0c_1\log^{\frac{9}{2}} x}{2} \left( 4\sqrt{x}\log^A x +18\frac{x^{\frac{11}{12}}}{\log ^{\frac{A}{2}}x}+5x^{\frac{5}{6}}+\frac{5}{2}x^{\frac{5}{6}}\log x \right).
\end{align*}
\end{theorem}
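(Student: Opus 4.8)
The plan is to run the classical Bombieri--Vinogradov argument --- character decomposition, reduction to primitive characters, and a division of the moduli into a short range governed by the pointwise bound of Theorem~\ref{theo:1} and a long range governed by the explicit large-sieve input Theorem~1.2 of \cite{Amir} --- while carrying every constant explicitly. The role of the excluded modulus $q_0$ and of Theorem~\ref{theo:kadiri} is to guarantee that in the short range no $L$-function in play has a zero in the $R_1$-region, so that the Siegel term of Theorem~\ref{theo:1} is either absent or harmless.

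First I would use, for $(a,q)=1$,
$$\psi(x;q,a)-\frac{x}{\varphi(q)}=\frac{1}{\varphi(q)}\Bigl(\psi(x,\chi_0)-x\Bigr)+\frac{1}{\varphi(q)}\sum_{\chi\neq\chi_0}\overline{\chi}(a)\,\psi(x,\chi),$$
so the left-hand side of the claimed inequality is at most $\sum_{q\le Q,\,q_0\nmid q}\tfrac1{\varphi(q)}\bigl(|\psi(x,\chi_0)-x|+\sum_{\chi\neq\chi_0}|\psi(x,\chi)|\bigr)$, a bound free of $a$. For the principal character I would write $\psi(x,\chi_0)=\psi(x)-\sum_{p\mid q}\sum_{p^k\le x}\log p$, whence $|\psi(x,\chi_0)-x|\le|\psi(x)-x|+\tfrac{\log q}{\log 2}\log x$, and insert the explicit bound $|\psi(x)-x|\le 34x(\log x)^{1.52}\exp(-0.81\sqrt{\log x})$ from \cite{Dusart}; after multiplication by the pertinent $1/\varphi$-sum these pieces become the $34x(\log x)^{1.52}\exp(-0.81\sqrt{\log x})$ and $\log q/\log 2$ terms inside $E(x,A)$.

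Next I would write each non-principal $\chi$ modulo $q$ as induced by a primitive character $\chi^{*}$ of conductor $q^{*}\mid q$ and replace $\psi(x,\chi)$ by $\psi(x,\chi^{*})$ at a cost of at most $\tfrac{\log q}{\log 2}\log x$; summed over $q\le Q$ this feeds the $(\log(x\log^{A}x))^{2}\tfrac{\sqrt x}{4\log^{A-2}x}$ term and the leading $\sqrt x$. Interchanging summations turns the rest into $\sum_{q^{*}\le Q}\bigl(\sum_{q\le Q,\,q^{*}\mid q}\tfrac1{\varphi(q)}\bigr)\sum^{*}_{\chi^{*}}|\psi(x,\chi^{*})|$, where $\sum^{*}$ is over primitive characters of conductor $q^{*}$; using $\varphi(q^{*}r)\ge\varphi(q^{*})\varphi(r)$ and $\sum_{r\le y}1/\varphi(r)\le c_1(\log y+B)$ the inner weight is at most $\tfrac{c_1\log x}{2\varphi(q^{*})}(1+o(1))$ since $\log Q\le\tfrac12\log x$. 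Now split at $q^{*}\le Q_1$ versus $Q_1<q^{*}\le Q$. On the short range $q^{*}\le Q_1\le\log^{A}x$ I would apply Theorem~\ref{theo:1} with $\alpha_1=A$, $\alpha_2=A-3$ --- admissible because $A>3$ and $\log\log x\ge\tfrac{11\log 10}{2A}$ is exactly the hypothesis $Y_0\ge\tfrac{11\log 10}{\alpha_1+\alpha_2+3}$ (here $\alpha_1+\alpha_2+3=2A$) --- obtaining $\sum^{*}_{\chi^{*}}|\psi(x,\chi^{*})|\le\sum_{\chi\neq\chi_0\,(\mathrm{mod}\ q^{*})}|\psi(x,\chi)|<x\bigl(\tfrac{C(A,A-3,X_0)}{\log^{A-3}x}+E_0\tfrac{x^{\beta_0-1}}{\beta_0}\bigr)$ up to the already accounted principal-character corrections. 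The exceptional modulus is controlled by noting that $q_0\nmid q$ forces $q_0\nmid q^{*}$, so by the uniqueness clause of Theorem~\ref{theo:kadiri} the product $\prod(s,q^{*})$ has no zero with $\beta\ge 1-\tfrac1{2R_1\log Q_1}$; hence either $E_0=0$ or $\beta_0<1-\tfrac1{2R_1\log Q_1}\le 1-\tfrac1{2AR_1\log\log x}$, and since $t\mapsto x^{t-1}/t$ is increasing the Siegel contribution is at most $\tfrac{x^{-1/(2AR_1\log\log x)}}{1-1/(2AR_1\log\log x)}$ per conductor, yielding the corresponding term of $E(x,A)$ after multiplying by $x$ and summing. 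Summing the main contribution over $q^{*}\le Q_1$ against $\tfrac{c_1\log x}{2\varphi(q^{*})}$, with $\sum_{q^{*}\le Q_1}1/\varphi(q^{*})\le c_1(1+A\log\log x)$, gives the term $\tfrac{c_1^{2}xC(A,A-3,X_0)(1+A\log\log x)\log x}{2\log^{A-4}x}$.

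On the long range $Q_1<q^{*}\le Q$ I would feed $\sum_{Q_1<q^{*}\le Q}\tfrac1{\varphi(q^{*})}\sum^{*}_{\chi^{*}}|\psi(x,\chi^{*})|$ into Theorem~1.2 of \cite{Amir}, whose explicit Vaughan / large-sieve estimate supplies the main term $\tfrac{2c_0c_1x}{\log^{A-9/2}x}$, the truncation term $\tfrac{2c_0c_1x\log^{9/2}x}{Q_1}$ from summing the $x/q^{*}$-type contributions dyadically over $Q_1<q^{*}\le Q$, and the last line of $E(x,A)$ from the type~I and type~II ranges, with $c_0$ being exactly the constant assembled there from $\psi(113)/113$ and the $\log 2$-factors. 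Collecting everything, bounding $\log(x\log^{A}x)\le\log x+A\log\log x$ where convenient and absorbing the lower-order pieces into $E(x,A)$, finishes the proof; the condition $\log\log x\ge\max\{7,\tfrac{11\log 10}{2A}\}$ is used both to invoke Theorem~\ref{theo:1} and to keep the $1/\varphi$-estimates effective. The main obstacle will be the bookkeeping on the long range: one must phrase Theorem~1.2 of \cite{Amir} in exactly the weighted primitive-character form used here, reconcile its normalisation with the factor $\tfrac{c_1\log x}{\varphi(q^{*})}$ arising from expanding a conductor into its multiples, confirm that restricting the conductor to $(Q_1,Q]$ genuinely yields the clean $x\log^{9/2}x/Q_1$ saving rather than something larger, and verify that none of its explicit error terms covertly requires discarding an exceptional modulus beyond $q_0$. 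A secondary subtlety is making the implication ``$q_0\nmid q\Rightarrow\prod(s,q^{*})$ has no near-$1$ zero'' fully rigorous, by fixing $q_0$ to be the conductor of the exceptional zero so that $q^{*}\mid q$ and Theorem~\ref{theo:kadiri} genuinely force the contradiction.
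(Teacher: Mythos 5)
Your proposal follows essentially the same route as the paper: reduce to primitive characters, weight by $\sum_{m\le Q}1/\varphi(m)\le\frac{c_1}{2}\log x$, split at $Q_1$, treat the short range with Theorem \ref{theo:1} (with $\alpha_1=A$, $\alpha_2=A-3$), the exceptional zero via Theorem \ref{theo:kadiri}, and $|\psi(x)-x|$ via the explicit bound of \cite{Trudgian2}, and the long range with Theorem 1.2 of \cite{Amir} plus partial summation. The only cosmetic deviations are the attribution of the $34x(\log x)^{1.52}\exp(-0.81\sqrt{\log x})$ bound (it is Platt--Trudgian, not Dusart) and minor bookkeeping of where the $\sqrt{x}$-sized terms are absorbed, neither of which affects the argument.
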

This paper is structured as follows. In Section \ref{sec.intro} we introduce useful bounds on the zeta and $L$ functions. In Section \ref{sec:pr} we prove the explicit version of \eqref{eq:in} and in Section \ref{sec:theo1} we conclude the Proof of Theorem \ref{theo:1}. In Section \ref{sec:theo2} we prove Theorem \ref{theo:2}.
\section{Some useful bounds}
\label{sec.intro}
We start introducing Corollary 2.1 in \cite{Bennett1} that, improving \cite{Trudgian}, gives a bound on the value of $N(T, \chi)$, the number of zeroes up to $T$ of $L(s,\chi)$.
\begin{lemma}
\label{lemma:N}
Let $\chi$ be a character with conductor $q>1$. If $T\ge 5/7$, then
\begin{equation*}
\left|N(T, \chi)-\frac{T}{\pi}\log \frac{qT}{2 \pi e} \right|\le r_1(q,T):=\min \left\{0.247 \log qT+ 6.894,0.298 \log qT+ 4.358 \right\}.
\end{equation*}
\end{lemma}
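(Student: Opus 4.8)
Although this estimate is quoted from \cite{Bennett1} (which refines \cite{Trudgian}), here is how one would establish a bound of this shape from scratch. The plan is to carry out the classical explicit Riemann--von Mangoldt argument for the completed $L$-function, tracking every constant. Since the conductor of $\chi$ equals $q>1$ the character is primitive, so one forms $\xi(s,\chi)=(q/\pi)^{(s+\mathfrak a)/2}\,\Gamma((s+\mathfrak a)/2)\,L(s,\chi)$, where $\mathfrak a=0$ if $\chi(-1)=1$ and $\mathfrak a=1$ otherwise; this is entire of order $1$, its zeros are exactly the nontrivial zeros of $L(s,\chi)$, and it satisfies a functional equation $\xi(s,\chi)=\varepsilon(\chi)\,\xi(1-s,\overline\chi)$ with $|\varepsilon(\chi)|=1$. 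Hence $N(T,\chi)=\tfrac{1}{2\pi}\,\Delta_{\mathcal R}\arg\xi(s,\chi)$, where $\mathcal R$ is the rectangle with vertices $\sigma_0\pm iT$ and $1-\sigma_0\pm iT$ for a fixed $\sigma_0>1$ (to be chosen), a rectangle containing precisely the nontrivial zeros with $|\gamma|\le T$.

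First I would use the functional equation together with the reflection $\overline{\xi(\overline s,\overline\chi)}=\xi(s,\chi)$ to reduce the total change of argument around $\mathcal R$ to twice its change along the right part of the boundary: the vertical segment $\Re s=\sigma_0$ and the two horizontal segments from $\sigma_0$ to $\tfrac12$ at heights $\pm T$. On each piece I would split $\arg\xi=\arg((q/\pi)^{(s+\mathfrak a)/2})+\arg\Gamma((s+\mathfrak a)/2)+\arg L(s,\chi)$. The first term contributes exactly $\tfrac{T}{\pi}\log(q/\pi)$; Stirling's formula with an explicit remainder turns the $\Gamma$-term into $\tfrac{T}{\pi}\log\tfrac{T}{2e}$ plus an explicitly bounded quantity, and together these give the main term $\tfrac{T}{\pi}\log\tfrac{qT}{2\pi e}$ and part of the additive constant. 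The contribution of $\arg L(s,\chi)$ on the vertical line $\Re s=\sigma_0$ is $O(1)$: once $\sigma_0$ is fixed above a suitable threshold, $L(s,\chi)$ lies in a disk about $1$ of radius less than $1$, so $|\arg L|$ is bounded by the logarithm of an Euler-product bound of $\zeta(\sigma_0)$-type.

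The crux, and the step that actually decides the explicit constants, is bounding $\arg L(\sigma+iT,\chi)$ on the horizontal segments $\tfrac12\le\sigma\le\sigma_0$. Here I would run Backlund's argument: between consecutive zeros of $\sigma\mapsto\Re L(\sigma+iT,\chi)$ the argument changes by less than $\pi$, so the total change is at most $\pi$ times one plus the number of such zeros, and that number is controlled, via Jensen's formula applied to the auxiliary entire function $z\mapsto\tfrac12(L(z+iT,\chi)+\overline{L(\overline z+iT,\chi)})$ on a disk centred at $\sigma_0+iT$ of radius slightly larger than $\sigma_0-\tfrac12$, by the number of zeros of $L$ in that disk. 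Jensen's formula requires an upper bound for $|L|$ on the disk --- a crude polynomial bound $|L(s,\chi)|\ll q(|t|+2)$ from the functional equation and Phragm\'en--Lindel\"of interpolation suffices --- together with a lower bound for $|L|$ at the centre, again from the Euler product at $\Re s=\sigma_0$. Collecting everything expresses $N(T,\chi)-\tfrac{T}{\pi}\log\tfrac{qT}{2\pi e}$ in the form $a\log qT+b$ with $a$ and $b$ depending on $\sigma_0$ and on the Jensen radius, and the two competing bounds $0.247\log qT+6.894$ and $0.298\log qT+4.358$ arise from optimising these parameters in two different ways (once minimising $a$, once minimising $b$); the hypothesis $T\ge 5/7$ is exactly what is needed for the Stirling and disk estimates to hold with these numbers. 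I expect the main obstacle to be precisely this effective bookkeeping: the Stirling remainder, the Euler-product bounds at $\Re s=\sigma_0$, the polynomial bound feeding Jensen's formula, and the choice of radius must each be made numerically sharp and then balanced against one another, since it is this joint optimisation --- not any single new idea --- that produces constants as good as $0.247$ and $0.298$.
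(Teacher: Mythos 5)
This lemma is not proved in the paper at all: it is imported verbatim as Corollary 2.1 of \cite{Bennett1} (which sharpens \cite{Trudgian}), so there is no internal argument to measure you against, and your decision to sketch the proof of the cited result is the right reading of the situation. Your outline is indeed the route those papers take: the argument principle for the completed function $\xi(s,\chi)$ on a rectangle, the functional-equation/reflection symmetry reducing everything to the right half of the contour, Stirling with explicit remainder producing the main term $\frac{T}{\pi}\log\frac{qT}{2\pi e}$, an Euler-product bound for $\arg L$ on $\Re s=\sigma_0$, and Backlund's device (counting sign changes of $\Re L$ via Jensen's formula on a disc centred at $\sigma_0+iT$) for the horizontal segments. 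One caveat on the quantitative side: the ``crude'' bound $|L(s,\chi)|\ll q(|t|+2)$ you propose to feed into Jensen's formula would not deliver coefficients as small as $0.247$ and $0.298$; the cited proof needs a sharp explicit Phragm\'en--Lindel\"of (Rademacher-type) convexity estimate of the shape $|L(s,\chi)|\le\bigl(\frac{q|1+s|}{2\pi}\bigr)^{\frac{1+\eta-\sigma}{2}}\zeta(1+\eta)$, together with applying Backlund's argument to high powers of $L$ (equivalently to $\frac12\bigl(L^{N}(z+iT,\chi)+\overline{L^{N}(\bar z+iT,\chi)}\bigr)$ and letting $N$ grow) to depress the $\log qT$ coefficient, and the two alternatives inside the minimum come from two different parameter choices in exactly the optimisation you describe. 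So your proposal is structurally correct and matches the source's method, but as written it would prove a bound of the same shape with worse numerical constants unless those two refinements are added.
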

To bound the possible Siegel zeroes we will use the results in \cite{Bordignon1} and \cite{Bordignon2}, that can be stated as follows.
\begin{lemma}
\label{lemma:B}
Let $\beta_0$ be a Siegel zero and $q$ the modulus of the corresponding character $\chi$, then we have
\begin{equation*}
\frac{100}{\sqrt{q}\log^2 q} \le \beta_0 \le 1-\frac{100}{\sqrt{q}\log^2 q}.
\end{equation*}
\end{lemma}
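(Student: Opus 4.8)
The plan is to obtain both inequalities directly from \cite{Bordignon1} and \cite{Bordignon2}, with only the functional equation and Theorem \ref{theo:kadiri} as auxiliary ingredients; the two bounds, though, have quite different status. The upper bound $\beta_0\le 1-100/(\sqrt q\log^2 q)$ is the substantive part: it is the explicit form of Siegel's theorem established in those two papers, and proving it here amounts to quoting them. The lower bound $\beta_0\ge 100/(\sqrt q\log^2 q)$ is, by contrast, essentially automatic, since a Siegel zero lies very close to $1$; it is stated only so that the term $x^{\beta_0-1}/\beta_0$ occurring in Theorem \ref{theo:1} is harmless, $1/\beta_0$ being then at most $\sqrt q\log^2 q/100$.

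For the upper bound I would recall the route of \cite{Bordignon1,Bordignon2}. One first reduces to the primitive character $\chi^{*}$ modulo $q^{*}\mid q$ inducing $\chi$: the discarded Euler factors vanish only on $\Re s=0$, so the real zeros in $(0,1)$ are unchanged, and since $q^{*}\le q$ a bound in terms of $q^{*}$ implies the one claimed for $q$. One then proves an explicit lower bound of the shape $L(1,\chi^{*})\gg (q^{*})^{-1/2}$; since $L(\sigma,\chi^{*})$ is real on $(0,1)$ and, by the uniqueness part of Theorem \ref{theo:kadiri}, has no zero strictly between $\beta_0$ and $1$, the mean value theorem gives $L(1,\chi^{*})=(1-\beta_0)L'(\sigma_0,\chi^{*})$ for some $\sigma_0\in(\beta_0,1)$, and an explicit bound $|L'(\sigma_0,\chi^{*})|\ll \log^2 q^{*}$ on that segment yields $1-\beta_0\gg (q^{*})^{-1/2}\log^{-2}q^{*}$. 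The papers \cite{Bordignon1} and \cite{Bordignon2} carry this out on complementary ranges of the modulus and push the implied constant down to $100$; I would cite their final inequalities rather than rerun the optimisation.

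For the lower bound, Theorem \ref{theo:kadiri} already gives $\beta_0\ge 1-\tfrac{1}{R_0\log q}$ (the Siegel zero being real, so $\gamma=0$ and the maximum there equals $q$). It therefore suffices to check the elementary inequality $1-\tfrac{1}{R_0\log q}\ge 100/(\sqrt q\log^2 q)$, i.e.\ $\sqrt q\,\log q\,(\log q-R_0^{-1})\ge 100$; as the left-hand side is increasing in $q$ for $q\ge 2$, this holds for every $q$ with $\sqrt q\log^2 q\ge 200$, which is precisely the range in which the interval $[100/(\sqrt q\log^2 q),\,1-100/(\sqrt q\log^2 q)]$ is non-empty. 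One may also see the matching of constants conceptually: $\chi$ is real, so $\Lambda(s,\chi)=\pm\Lambda(1-s,\chi)$ and $1-\beta_0$ is again a zero of $L(s,\chi)$, so a bound pushing $\beta_0$ away from $1$ simultaneously pushes $\beta_0$ away from $0$; the direct deduction above is, however, shorter and self-contained.

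The only real obstacle is the upper bound, namely the explicit Siegel estimate itself; this is the main theorem of \cite{Bordignon1,Bordignon2} and I would simply quote it. Everything else is bookkeeping: verifying that the ranges of the modulus on which the constant $100$ is established in those references together cover all $q$ for which the asserted interval is non-empty, and reading the statement, for the remaining small $q$, as the assertion that no Siegel zero exists — consistently with those same bounds.
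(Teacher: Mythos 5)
Your proposal is correct and takes essentially the same route as the paper, which states Lemma \ref{lemma:B} without any proof beyond presenting it as a restatement of the results of \cite{Bordignon1} and \cite{Bordignon2}; the upper bound is exactly the explicit estimate of those papers, quoted as you do. Your additional elementary check of the lower bound via the zero-free region of Theorem \ref{theo:kadiri} (and the remark on the reflected zero $1-\beta_0$) is sound and merely makes explicit what the paper leaves implicit.
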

Moreover, for $1< b \le 1.3$, by equation (1.17) in \cite{R-S1} we have
\begin{equation}
\label{eq:z'/z}
-\frac{\zeta'}{\zeta}(b)< \frac{1}{b-1} - C + 0.1877 (b - 1),
\end{equation}
with $C$ the Euler--Mascheroni constant. 
Also, taken $b>1$ and with $t \ge 1126$, by partial summation and Theorem 4 and 14 in \cite{R-S}, we obtain
\begin{align*}
\nonumber &\sum_{\frac{t}{2}<n<t-1/2}\Lambda (n)=\psi(t-1/2)-\psi\left(\frac{t}{2}\right) \le r_2(t):=\frac{t}{2}\left(1 +\frac{1}{\log (t-1/2)}-\frac{1}{2\log t/2}\right)\\ &+\sqrt{t-1/2}\left(1+\frac{1}{\log (t-1/2)}\right)-0.98\sqrt{\frac{t}{2}}+3(t-1/2)^{1/3}-1/2\left(1+\frac{1}{2\log (t-1/2)}\right)
\end{align*}
and this gives
\begin{equation}
\label{eq:Lambdan}
\sum_{\frac{t}{2}<n<t-1/2}\Lambda (n) n^{-b}\le \left(\frac{2}{t}\right)^br_2(t).
\end{equation}

\section{Preliminary results}
\label{sec:pr}
Letting $\rho =\beta +i \gamma$, we define
\begin{equation*}
z(\chi)=\sharp \lbrace \rho \mid \rho \neq 0, \beta >1/2, L(\rho,\chi)=0 \rbrace.
\end{equation*}
that is fundamental in estimating the error term in the prime number theorem in arithmetic progression.
In this section we aim to prove the following fundamental result, that is an improved version of Theorem 8 in \cite{Liu-Wang} and Lemma 2.1 in \cite{Yamada}. 
\begin{lemma}
\label{lemma:main}
Let $\chi$ be a Dirichlet character modulo $q$ and $T=\log^{\alpha} x$, with $\alpha=\alpha_1+\alpha_2+3$, $\alpha_1, \alpha_2 \in \mathbb{R}^{+}$ and $q \le \log^{\alpha_1} x$.
Assuming $x > \exp \exp C$, with $C\ge 2$, we have
\begin{equation}
\label{eq:0.}
|\psi(x,\chi)-\delta(\chi)x|\le  \sum_{\rho \in z(\chi),|\gamma|\le T} \frac{x^{\beta}}{|\rho|}+R(C,\alpha_2, \alpha_1) \frac{x \log x \log \log x}{T},
\end{equation}
with $\delta(\chi)=1$ if $\chi$ is principal and $\delta(\chi)=0$ otherwise. With $R(C,\alpha_2, \alpha_1)$ in Table~\ref{tab:min}. 
\end{lemma}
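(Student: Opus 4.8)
The plan is to start from the classical truncated explicit (Perron-type) formula for $\psi(x,\chi)$ and carefully track every error term so that, after setting $T=\log^\alpha x$ with $\alpha=\alpha_1+\alpha_2+3$ and $q\le\log^{\alpha_1}x$, the total ``non-zero'' contribution is bounded by $R\,\frac{x\log x\log\log x}{T}$ rather than the $R\,\frac{x\log x}{T}$ of \cite{Liu-Wang} and \cite{Yamada}. Concretely, one applies the Perron formula on a vertical segment $\mathrm{Re}\,s=b$ with $b=1+\tfrac{1}{\log x}$ (or $b$ chosen near $1$ so that \eqref{eq:z'/z} and \eqref{eq:Lambdan} apply), shifting the contour to pick up the pole at $s=1$ (giving $\delta(\chi)x$) and the zeros $\rho$ of $L(s,\chi)$ with $|\gamma|\le T$, each contributing $-x^\rho/\rho$, and bounded in absolute value by $x^\beta/|\rho|$; that accounts for the main sum in \eqref{eq:0.}. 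Everything else must be absorbed into the error term.

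The heart of the matter, and where the $\log\log x$ saving comes from, is estimating the truncation error in Perron's formula. Here I would reshape Goldston's argument (the explicit version promised as Lemma \ref{lemma:CW1} / ``Lemma:CW1'' in the introduction): instead of the crude bound that produces an extra $\log x$, one splits the sum $\sum_n \Lambda(n)\chi(n)n^{-s}$ near $n\approx x$ into a short ``close'' range $|n-x|$ small, a ``medium'' range where one uses \eqref{eq:Lambdan} (the bound $\sum_{t/2<n<t-1/2}\Lambda(n)n^{-b}\le(2/t)^b r_2(t)$) together with partial summation, and a ``far'' range handled by the standard $\frac{x^b}{T}\sum \Lambda(n)\chi(n)n^{-b}$ estimate, bounding the last Dirichlet series by $-\frac{\zeta'}{\zeta}(b)$ via \eqref{eq:z'/z}. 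The gain is that, after optimizing the split, the logarithmic loss is $\log\log x$-sized rather than $\log x$-sized. One also needs Lemma \ref{lemma:N} (the count $N(T,\chi)$) to control $\sum_{|\gamma|\le T}1/|\rho|$ and the number of zeros in dyadic blocks, but that enters the error term only through $\log(qT)\ll\log\log x$ under our hypotheses, which again is consistent with the claimed shape.

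The remaining steps are bookkeeping: (i) handle the contribution of zeros with $1/2<\beta$ but $|\gamma|$ large, or rather confirm they do not appear since we truncate at $T$ and push the horizontal/left pieces of the contour into the error term using $N(T,\chi)$ and the trivial bound $x^\beta\le x$; (ii) bound the horizontal integrals at height $\pm T$ using standard estimates for $\frac{L'}{L}(s,\chi)$ in terms of zeros within distance $O(1)$, again invoking Lemma \ref{lemma:N}; (iii) collect all constants into the single explicit constant $R(C,\alpha_2,\alpha_1)$, using $x>\exp\exp C$ with $C\ge 2$ to replace lower-order terms (powers of $\log\log x$, the $\sqrt{x}$-type terms from \eqref{eq:Lambdan}, etc.) by absolute constants times the main error term, and tabulate the outcome in Table~\ref{tab:min}. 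The main obstacle is step (ii) combined with the optimization in the previous paragraph: getting the horizontal-segment bound and the truncation bound to both come out at the $\frac{x\log x\log\log x}{T}$ level \emph{with explicit, reasonably small constants} requires choosing $b$, the dyadic decomposition, and the truncation parameter jointly, and this is exactly the delicate explicit-analysis part that \cite{Yamada} did more wastefully; the rest is comparatively mechanical.
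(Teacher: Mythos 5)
Your overall route is the same as the paper's: Chen--Wang/Perron at $b=1+1/\log x$, an explicit Goldston-type bound for the truncation error, a contour shift picking up the zeros, horizontal and left-edge integrals controlled by $L'/L$ estimates and the zero-counting Lemma~\ref{lemma:N}, and a final bookkeeping step under $x>\exp\exp C$. However, there is a genuine gap in the one place where the claimed $\log\log x$ saving must actually be produced. You attribute the gain to splitting the sum near $n\approx x$ and using \eqref{eq:Lambdan} (a Chebyshev-type bound) with partial summation, ``after optimizing the split''. That mechanism does not work: any Chebyshev-type input allows the interval just below $x$ to be packed with primes as far as the bound knows, and $\sum_{n\,\mathrm{near}\,x}\Lambda(n)/(x-n)$ then still costs a factor $\log x$, no matter where you split. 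The actual engine, both in Goldston's Lemma~2 and in the paper's Lemma~\ref{lemma:ll}, is the Montgomery--Vaughan short-interval bound $\pi(x)-\pi(x-y)\le 2y/\log y$ (equation \eqref{eq:P}), which turns $\sum_{p\,\mathrm{near}\,x}1/(x-p)$ into $\sum_n 1/((n+1)\log n)\approx\log\log x$, with prime powers $p^l$, $l\ge 2$, handled separately by an integral estimate; \eqref{eq:Lambdan} only enters the comparatively harmless term $\sum_{x/2<n<x-1/2}\Lambda(n)n^{-b}$ in Lemma~\ref{lemma:CW1}. You do name Goldston's lemma as the ingredient to be made explicit, so the right reference is on the table, but the argument you describe in its place would fail to deliver the error term in \eqref{eq:0.}.

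A smaller omission: the main sum in \eqref{eq:0.} runs only over $z(\chi)$, i.e.\ zeros with $\beta>1/2$, so after the contour shift you must also absorb the zeros with $\beta\le 1/2$, $|\gamma|\le T$, into the error term. This is not entirely automatic, because the reflection $1-\beta_0$ of a possible Siegel zero can lie very close to $s=0$, making $1/|\rho|$ as large as $\sqrt{q}\log^2 q/100$; the paper needs the explicit lower bound of Lemma~\ref{lemma:B} (together with Theorem~\ref{theo:kadiri} and Lemma~\ref{lemma:N}) to bound this contribution by the $R_{11}$-type terms of Theorem~\ref{the:func}, which are of size $\sqrt{x}$ times powers of $\log$ and hence acceptable. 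Your step (i), which suggests these zeros either do not appear or are handled by the trivial bound $x^\beta\le x$, glosses over exactly this point.
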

\begin{table}[H]
    \begin{tabular}{ | l | l | l | l | }
    \hline
     $C$ & $\alpha_1$ & $\alpha_2$ & $R$ \\  
    \hline
    $3.4 $ & $ 1$ & $ 1$ & $ 35.4$ \\
    \hline
     $3.7 $ & $ 1$ & $ 2$ & $31.7 $ \\
     \hline
     $ 4.3$ & $ 1$ & $5 $ & $27.4 $ \\
    \hline
     $3.8$ & $2 $ & $1 $ & $20.3 $ \\
    \hline
     $4.3 $ & $ 2$ & $4 $ & $35.5 $ \\
    \hline
     
    \end{tabular}
    \quad
    \begin{tabular}{ | l | l | l | l | }
    \hline
     $C$ & $\alpha_1$ & $\alpha_2$ & $R$ \\  
    \hline
    $ 4$ & $3 $ & $ 1$ & $30.8 $ \\
    \hline
    $4.6 $ & $3 $ & $5 $ & $34.7 $ \\
    \hline
     $4.3 $ & $4 $ & $ 1$ & $22.5 $ \\
    \hline
     $4.9 $ & $4 $ & $ 6$ & $21.5 $ \\
    \hline
     $4.5 $ & $5 $ & $1 $ & $29.1 $ \\
    \hline
     
    \end{tabular}
    \quad
    \begin{tabular}{ | l | l | l | l | }
    \hline
     $C$ & $\alpha_1$ & $\alpha_2$ & $R$ \\  
    \hline
    $5.1 $ & $5 $ & $7 $ & $30.1 $ \\
    \hline
    $4.7 $ & $ 6$ & $ 1$ & $26.1 $ \\
    \hline
     $ 5.3 $ & $ 6$ & $8 $ & $23.7 $ \\
    \hline
     $4.9 $ & $7 $ & $1 $ & $20.2 $ \\
    \hline
     $5.5 $ & $7 $ & $9 $ & $21.1 $ \\
    \hline
     
    \end{tabular}
    \quad
\caption{Upper bound for $R(x,\alpha_2, \alpha_1)$}
\label{tab:min}   
\end{table}
Note that using Theorem \ref{the:func} we can compute $R$ for different values of $C$, $\alpha_1$ and $\alpha_2$. 
Lemma \ref{lemma:main} improves Lemma 2.1 in \cite{Yamada}, in the error term, reducing a $\log x$ factor to a $\log \log x$ one. It is interesting to note that Littlewood, in \cite{Littlewood}, assuming the Riemann Hypothesis proved the above result for $\psi(x)$ with an error term of $\frac{x\log x}{T}$. This suggests that even if it should be possible to improve the error term in the above result, it will probably be highly complicated.
Yamada's Lemma 2.1 is based on Lemma 1 in \cite{Chen-Wang}, and splitting a sum in a similar way as done by Dudek in \cite{Dudek} and being more careful with the error terms, it is possible to obtain an upper bound for $R(x,\alpha_1,\alpha_2)$ that is half in size compared to that of Yamada. We will not give more details on this as this result is superseded by the one obtained making Goldston's result in \cite{Goldston} explicit.
Here we will prove a fundamental explicit version of Lemma 2 in \cite{Goldston}, with a partially different proof to better control the error term.
\begin{lemma}
\label{lemma:ll}
For any $x\ge 3$ we have
\begin{equation*}
\sum_{\frac{x}{2}<n\le x-1.5}\frac{\Lambda(n)}{(xn^{-1}-1)}\le r_3(x) \quad \text{and} \quad\sum_{x+1.5\le n< 2x}\frac{\Lambda(n)}{(1-xn^{-1})}  \le 2r_3(x),
\end{equation*}
with
\begin{align*}
r_3(x):= x \log x& \Big(\frac{2}{3}\left(4+\frac{1}{\log 2}\right)+2\log \log x+\frac{2}{\log x}+ \frac{16}{15}\log \left(\frac{\log x}{\log 2}\right)\\ &+\left(\frac{\pi^2}{6}-1 \right)\left(\frac{2}{1+\sqrt{x-1.5}-\sqrt{x}}-\frac{1}{\sqrt{x}}\left(1-\log \left(\frac{\sqrt{2}-1}{\sqrt{2}+1}\right)\right)\right)\Big).
\end{align*}
\end{lemma}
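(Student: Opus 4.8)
The plan is to reduce both sums to a single estimate of the form $\sum_n\Lambda(n)/\lvert x-n\rvert\le r_3(x)/x$ on the relevant range. For the first sum, $\dfrac{1}{xn^{-1}-1}=\dfrac{n}{x-n}\le\dfrac{x}{x-n}$ on $x/2<n\le x-1.5$ (since $n<x$), while for the second, $\dfrac{1}{1-xn^{-1}}=\dfrac{n}{n-x}\le\dfrac{2x}{n-x}$ on $x+1.5\le n<2x$ (since $n<2x$); this extra factor $2$ is exactly why the second bound is $2r_3(x)$ and not $r_3(x)$. For the second sum one should also first discard the far tail $3x/2\le n<2x$, on which $n-x\ge x/2$ and $\sum\Lambda(n)\le\psi(2x)-\psi(3x/2)=O(x)$ by \eqref{eq:Lambdan} and the Chebyshev-type bounds of \cite{R-S}, leaving the mirror image of $(x/2,x-1.5]$. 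So it suffices to bound $S:=\sum_{x/2<n\le x-1.5}\Lambda(n)/(x-n)$ by $r_3(x)/x$.

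First I would split $\Lambda(n)$ into its prime part and its proper prime-power part. For $n=p^k$ with $k\ge2$ the points $p^k$ in $(x/2,x-1.5]$ are widely spaced — consecutive squares about $2\sqrt x$ apart, higher powers sparser — so, after comparison with an integral, $\sum_{k\ge2}\sum_{x/2<p^k\le x-1.5}\frac{\log p}{x-p^k}$ is controlled by the single closest square term plus a tail summing the series $\sum_{k\ge2}k^{-2}=\pi^2/6-1$; this produces the last line of $r_3(x)$, with the factor $\frac{2}{1+\sqrt{x-1.5}-\sqrt x}-\frac1{\sqrt x}\bigl(1-\log\frac{\sqrt2-1}{\sqrt2+1}\bigr)$ recording that closest term together with the tail.

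For the prime part $\sum_{x/2<p\le x-1.5}\frac{\log p}{x-p}$ I would split according to the size of the gap $g:=x-p$ at a threshold that is a fixed power of $\log x$ (around $\log^2 x$). On the large-gap range $\frac1{x-p}\le g_{\min}^{-1}$, so the contribution is at most $g_{\min}^{-1}\bigl(\psi(x)-\psi(x/2)\bigr)=O(x/\log^2 x)$ by \eqref{eq:Lambdan} (equivalently Theorems 4 and 14 of \cite{R-S}), to be absorbed into the constant $\tfrac23\bigl(4+\tfrac1{\log2}\bigr)x\log x$ term of $r_3(x)$. On the small-gap range no explicit short-interval prime number theorem is available, so one uses only that each value of $g$ is attained by at most one prime — refined by a Brun--Titchmarsh count in the upper portion of the range — together with $\log p\le\log x$; summing $\sum_{1.5\le g\le\log^2 x}\frac{\log x}{g}$ and its refinement yields the terms $2x\log x\log\log x$ and $\tfrac{16}{15}x\log x\log(\log x/\log2)$, the leftover constants being collected into the $\tfrac23\bigl(4+\tfrac1{\log2}\bigr)x\log x$ and $\tfrac{2x\log x}{\log x}$ terms, with the small cases $3\le x<e^2$ (where $(x/2,x-1.5]$ is very short) checked directly. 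Adding the prime-power and prime contributions and using $x\ge3$ — which makes $\log\log x$, $\log(\log x/\log2)$ and $1+\sqrt{x-1.5}-\sqrt x$ positive and the crude counts effective — gives $S\le r_3(x)/x$, hence both inequalities.

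The main obstacle is the small-gap range. Because explicit estimates for $\psi(y)-\psi(y-h)$ are not available for $h$ below roughly $y\exp(-c\sqrt{\log y})$, there one is forced onto essentially trivial per-gap bounds, and the whole point of the lemma — trading a factor $\log x$ for $\log\log x$ relative to Lemma 2.1 of \cite{Yamada} — rests on organizing those gaps so that only $O(\log\log x)$ dyadic scales contribute rather than $O(\log x)$, while still tracking every constant (the Abel-summation boundary terms, the Brun--Titchmarsh constant, the prime-power tail, and the $3\le x<e^2$ range) precisely enough to land on the explicit $r_3(x)$ and not merely on something of the right order of magnitude.
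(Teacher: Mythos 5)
Your overall reduction (pulling out a factor $x$, resp.\ $2x$, and bounding $\sum\Lambda(n)/|x-n|$, splitting off the proper prime powers via an integral comparison that produces the $\pi^2/6-1$ tail) matches the paper's strategy, but the treatment of the prime part contains a genuine gap that breaks the proof precisely in the regime the lemma is for. You split the gaps $g=x-p$ at a threshold of order $\log^2 x$ and bound the large-gap contribution by $g_{\min}^{-1}\bigl(\psi(x)-\psi(x/2)\bigr)=O\bigl(x/\log^2 x\bigr)$, claiming this can be absorbed into the constant term of $r_3(x)$. It cannot: after dividing out the factor $x$, the target is $r_3(x)/x\asymp \log x\,(2\log\log x+O(1))$, and $x/\log^2 x$ exceeds any multiple of $\log x\log\log x$ once $x$ is large (already for $\log x$ of size a few hundred, i.e.\ exactly the medium-sized range of the paper). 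Conversely, if you avoid the threshold by using the trivial ``one prime per gap'' bound over the whole range $1.5\le g\le x/2$, you get $\log x\cdot\log(x/2)\asymp\log^2 x$, which is the old Chen--Wang/Yamada-type bound, not the claimed $\log\log$ saving. So as structured the argument proves neither inequality.

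The missing idea is that the Montgomery--Vaughan (Brun--Titchmarsh type) bound $P(x,y)=\pi(x)-\pi(x-y)\le 2y/\log y$ is available for \emph{all} $1<y\le x$, and must be used at \emph{every} gap scale, not only ``in the upper portion'' of a small-gap window. The paper (following Goldston) writes $\sum_{p}\frac{1}{x-p}=\sum_{n}\frac{1}{n}\bigl(P(x,n)-P(x,n-1)\bigr)$ and sums by parts, so that the scale-$n$ density $2/\log n$ produces $2\sum_{n}\frac{1}{(n+1)\log n}\approx 2\log\log x$ over the full range up to $x/2$, with the boundary terms giving the explicit constants; no threshold and no short-interval lower-bound information are needed, since only an upper bound on primes in the interval $(x-y,x]$ enters. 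Your instinct that Brun--Titchmarsh is the relevant input is right, but unless you deploy it at all scales via this partial summation (or an equivalent dyadic decomposition covering all gaps up to $x/2$ with the $2/\log y$ density), the large-gap range cannot be controlled and the coefficient $2\log\log x$ in $r_3(x)$ is not recovered.
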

\begin{proof}
The proof is based on Lemma 2 in \cite{Goldston}, with a partially different proof to obtain a better explicit bound. We will prove the first of the two inequalities, as the second follows basically in the same way.
We have 
\begin{equation}
\label{eq:split}
\sum_{\frac{x}{2}<n\le x-1.5}\frac{\Lambda(n)}{(xn^{-1}-1)}\le x  \sum_{1 \le l \le \frac{\log x}{\log 2}}\sum_{\frac{x}{2}<p^l\le x-1.5}\frac{\log p}{(x-p^l)}.
\end{equation}
We will first bound the right-hand side sum with $l=1$.
Now, letting $\pi(x)$ count the number of primes less than $x$ and define $P(x,y):=\pi(x)-\pi(x-y)$, in \cite{Montgomery} it is proved
\begin{equation}
\label{eq:P}
P(x,y) \le \frac{2y}{\log y} \quad 1< y \le x.
\end{equation}
We also have, with $2k$ the nearest even integer to $x$,
\begin{equation*}
\sum_{\frac{x}{2}<p\le x-1.5}\frac{1}{(x-p)}\le \sum_{k<\le 2k-1}\frac{1}{(x-p)}+2/3.
\end{equation*}
Now 
\begin{equation*}
\sum_{k\le p \le 2k-1}\frac{1}{(2k-p)}=\sum_{n=1}^k\frac{1}{n}\left(P(2k,n)-P(2k,n-1) \right)
\end{equation*}
\begin{equation*}
=\sum_{n=1}^{k-1}P(2k,n)\left(\frac{1}{n}-\frac{1}{n+1} \right)+\frac{1}{k}\left(P(2k,k)-P(2k,0)\right)
\end{equation*}
and using \eqref{eq:P},
\begin{equation*}
\le 2\sum_{n=2}^{k-1}\frac{1}{(n+1)\log n}+2+\frac{2}{\log k}   
\le \frac{2}{3\log 2}+2\log \log k +2 +\frac{2}{\log k}.
\end{equation*}
We now bound the right-hand side sum in \eqref{eq:split} for $l\ge 2$.
We have
\begin{equation}
\label{eq:right}
 \sum_{2 \le l \le \frac{\log x}{\log 2}}\sum_{\frac{x}{2}<p^l\le x-1.5}\frac{\log p}{(x-p^l)} \le \log x \sum_{2 \le l \le \frac{\log x}{\log 2}}\frac{1}{l}\sum_{\frac{x}{2}<p^l\le x-1.5}\frac{1}{(x-p^l)}.
\end{equation}
For $l \ge 2$, it is easy to see that
\begin{equation*}
\sum_{\frac{x}{2}<p^l\le x-1.5}\frac{1}{(x-p^l)}\le \int_{(\frac{x}{2})^{1/l}}^{ (x-1.5)^{1/l}-1 }\frac{1}{(x-t^l)}dt+\frac{2}{3}+\frac{2}{5} \le \frac{2}{l}\int_{\sqrt{\frac{x}{2}}}^{\sqrt{x-1.5}-1 }\frac{1}{(x-y^2)}dy+\frac{16}{15} ,
\end{equation*}
where in the last step we used the change of variables $t=y^{2/l}$. Noting that
\begin{equation*}
\int \frac{1}{(x-y^2)} dy=- \frac{\log \frac{|y-\sqrt{x}|}{|y+\sqrt{x}|}}{2\sqrt{x}},
\end{equation*}
we have 
\begin{align*}
 \sum_{2 \le l \le \frac{\log x}{\log 2}}\frac{1}{l}\sum_{\frac{x}{2}<p^l\le x-1.5}\frac{1}{(x-p^l)}\le \frac{16}{15}\log \left(\frac{\log x}{\log 2}\right)+\left(\frac{\pi^2}{6}-1 \right) \left(- \frac{\log \frac{|y-\sqrt{x}|}{|y+\sqrt{x}|}}{\sqrt{x}}\right)\bigg|_{\sqrt{\frac{x}{2}}}^{\sqrt{x-1.5}-1}.
\end{align*}
We can conclude the proof using that, for $ x>0$, $\log x \ge 1-1/x$.
\end{proof}
Note that it should be possible to improve the above result by a factor of $2/3$ bounding the two sums together.
We now introduce a variation of Chen and Wang Lemma 1 in \cite{Chen-Wang}, this is obtained using Lemma \ref{lemma:ll}. Being a bit more careful than Chen and Wan we also obtain a $\log 2$ saving, not counting the use of Lemma \ref{lemma:ll}.
\begin{lemma}
\label{lemma:CW1}
Take $f(s)=\sum_{n=1}^{\infty} \frac{\Lambda (n)}{n^s}$, that is absolutely converging for $\mathbb{R}(s)=\sigma>1$. Then for any $b>1$, $T\ge 1$ and $x=N+1/2 \ge 3$, with $N$ a positive integer, we have
\begin{equation*}
\sum_{n\le x} \Lambda(n) = \frac{1}{2\pi i}\int_{b-iT}^{b+iT}f(s)s^{-1}x^sds+R_{1}(x,T,b),
\end{equation*}
with
\begin{align*}
|R_{1}(x,T,b)|\le & \frac{x^b}{\pi T \log 2}\sum_{n=1}^{\infty} \frac{\Lambda(n)}{n^b}+\frac{2^{-1}x^b}{\pi T}\sum_{\frac{x}{2}<n<x-1/2}\Lambda(n)n^{-b}
 \\&+\frac{1}{\pi T}\Big(2^b\Big(\log(x-1/2) (x-1/2) \left(2+\frac{1}{x-1/2}\right)\left(\frac{x}{x-1/2}\right)^b
\\&+\log(x+1/2)  2(x+1/2)\left(\frac{x}{x+1/2}\right)^b+r_3(x)\Big)+2r_3(x)\Big).
\end{align*}
\end{lemma}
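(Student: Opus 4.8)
The plan is to run the classical truncated Perron argument, but to treat the integers $n$ closest to $x$ by means of Lemma~\ref{lemma:ll} rather than by the usual crude estimate; this is precisely what produces the improvement over Chen and Wang. First I would justify interchanging summation and integration: since $b>1$, the Dirichlet series $f(s)=\sum_{n\ge1}\Lambda(n)n^{-s}$ converges absolutely on the line $\Re s=b$, and
\[
\int_{b-iT}^{b+iT}\sum_{n\ge1}\Lambda(n)\,\bigl|n^{-s}s^{-1}x^s\bigr|\,|ds|\le\frac{2Tx^b}{b}\sum_{n\ge1}\frac{\Lambda(n)}{n^b}<\infty,
\]
so Fubini gives $\frac{1}{2\pi i}\int_{b-iT}^{b+iT}f(s)s^{-1}x^s\,ds=\sum_{n\ge1}\Lambda(n)I(x/n)$, where $I(y):=\frac{1}{2\pi i}\int_{b-iT}^{b+iT}\frac{y^s}{s}\,ds$. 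Shifting the contour (to the left past the simple pole at $s=0$ when $y>1$, and to the right when $0<y<1$) and bounding the two horizontal segments, whose total contribution is at most $y^b/(\pi T|\log y|)$, yields the effective bound $|I(y)-\delta(y)|\le y^b/(\pi T|\log y|)$ for every $y\ne1$, with $\delta(y)=1$ if $y>1$ and $\delta(y)=0$ if $0<y<1$. Since $x=N+\tfrac12$ is not an integer, $x/n\ne1$ for all $n$, and $\delta(x/n)=1$ exactly when $n\le x$; hence $\sum_{n\ge1}\Lambda(n)\delta(x/n)=\sum_{n\le x}\Lambda(n)$, and therefore
\[
|R_1(x,T,b)|\le\sum_{n\ge1}\Lambda(n)\,\bigl|I(x/n)-\delta(x/n)\bigr|\le\frac{x^b}{\pi T}\sum_{n\ge1}\frac{\Lambda(n)}{n^b\,|\log(x/n)|}.
\]

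The rest of the argument is the splitting of this last sum according to the distance of $n$ from $x$. For $n\le x/2$ or $n\ge2x$ one has $|\log(x/n)|\ge\log2$, which gives the first term $\frac{x^b}{\pi T\log2}\sum_{n\ge1}\Lambda(n)n^{-b}$; since $1/\log2>1/2$, there is enough room in this unrestricted sum to also absorb the contribution of the constant $\tfrac12$ over the range $x+\tfrac12<n<2x$ dealt with below. For $x/2<n<x-\tfrac12$ (equivalently $x/2<n\le x-1.5$ for our $x$) I would use the elementary inequality $\log(1+u)\ge 2u/(2+u)$ with $u=(x-n)/n$, so that $|\log(x/n)|^{-1}\le\tfrac12+\tfrac{n}{x-n}$: the constant $\tfrac12$ produces the second term $\frac{x^b}{2\pi T}\sum_{x/2<n<x-1/2}\Lambda(n)n^{-b}$, while for $\tfrac{n}{x-n}$ I bound $n^{-b}\le(2/x)^b$ (valid since $n>x/2$) and apply the first inequality of Lemma~\ref{lemma:ll}, obtaining at most $\frac{2^b}{\pi T}r_3(x)$. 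The range $x+\tfrac12<n<2x$ (equivalently $x+1.5\le n<2x$) is handled identically, now with $u=(n-x)/x$, $n^{-b}\le x^{-b}$ and $\frac{x}{n-x}\le\frac{n}{n-x}$, using the second inequality of Lemma~\ref{lemma:ll} to contribute at most $\frac{2r_3(x)}{\pi T}$.

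Finally, since $x=N+\tfrac12$, the only integers not covered above are $n=x-\tfrac12$ and $n=x+\tfrac12$. For these I estimate directly, with $\Lambda(n)\le\log n$ and the crude inequality $\log(1+u)\ge u/(1+u)$, which gives $|\log\bigl(x/(x-\tfrac12)\bigr)|^{-1}\le(x-\tfrac12)\bigl(2+\tfrac1{x-1/2}\bigr)$ and $|\log\bigl(x/(x+\tfrac12)\bigr)|^{-1}\le 2(x+\tfrac12)$; combined with $x^b/(x\mp\tfrac12)^b=(x/(x\mp\tfrac12))^b$ this reproduces the remaining terms of the claimed bound (the harmless extra factor $2^b$ being pulled out only to keep the constants uniform). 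Collecting all the contributions gives the stated inequality; throughout one uses only $x\ge3$ (to invoke Lemma~\ref{lemma:ll}), $b>1$ (for absolute convergence) and $T\ge1$.

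The argument contains no single hard step: the main obstacle is the bookkeeping of the near-$x$ range. One must choose the split points $x/2$, $x\mp\tfrac12$, $2x$ so that the two sums of Lemma~\ref{lemma:ll} apply verbatim, keep track of the inequalities $\log(1+u)\ge 2u/(2+u)$ and $\log(1+u)\ge u/(1+u)$ with the correct choice of $u$ on each piece, and check that the leftovers recombine into exactly the three groups of terms appearing in the statement. Being slightly more careful than Chen and Wang at the estimate $|\log(x/n)|^{-1}\le\tfrac12+\tfrac{n}{x-n}$ --- keeping the $\tfrac12$ separate instead of doubling --- is what yields the advertised factor-$\log2$ saving.
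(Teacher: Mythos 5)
Your proposal is correct and follows essentially the same route as the paper: a Chen--Wang/truncated-Perron reduction to $\sum\Lambda(n)(x/n)^b/|\log(x/n)|$, the split at $x/2$, $x\mp\tfrac12$, $2x$, the inequality $\log(1+u)\ge 2u/(2+u)$ to keep the factor $\tfrac12$ separate, and Lemma~\ref{lemma:ll} for the two near-$x$ sums. The only deviations (rederiving the effective Perron bound instead of citing Chen--Wang, and absorbing the $\tfrac12\sum_{x+1.5\le n<2x}\Lambda(n)n^{-b}$ piece into the $1/\log 2$ term rather than using $|\log(1-u)|\ge u$ there) are cosmetic and still land within the stated bound.
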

\begin{proof}
Following the proof of  Lemma 1 in \cite{Chen-Wang} we obtain
\begin{equation*}
|R_{1}(x,T,b)|\le \frac{x^b}{\pi T \log 2}\sum_{n=1}^{\infty} \frac{\Lambda(n)}{n^b}+\frac{1}{\pi T}\sum_{\frac{x}{2}<n<2x}\Lambda(n)\frac{(xn^{-1})^b}{\log(xn^{-1})}.
\end{equation*}
We are now left with obtaining an upper bound for the right-hand side sum. We start splitting the sum in two at $n=N$, obtaining
\begin{equation}
\label{eq:N}
\sum_{\frac{x}{a}<n<ax}\Lambda(n)\frac{(xn^{-1})^b}{\log(xn^{-1})}=\sum_{\frac{x}{2}<n<N}\Lambda(n)\frac{(xn^{-1})^b}{\log(xn^{-1})}+\sum_{N\le n<2x}\Lambda(n)\frac{(xn^{-1})^b}{\log(xn^{-1})}.
\end{equation}
The first term of \eqref{eq:N}, remembering that $x=N+1/2$, is
\begin{equation*}
\sum_{\frac{x}{a}<n<x-1/2}\Lambda(n)\frac{(xn^{-1})^b}{\log(xn^{-1})}.
\end{equation*}
We use that, for $x \ge 1$, by the Taylor expansion for $\log x$ it is easy to see that 
\begin{equation}
\label{eq:lg}
\log x\ge \frac{2(x-1)}{(x+1)},
\end{equation}
this gives
\begin{equation*}
\le 2^{-1}\sum_{\frac{x}{2}<n<x-1/2}\Lambda(n)\frac{(xn^{-1})^b(xn^{-1}+1)}{(xn^{-1}-1)} 
\end{equation*}
\begin{equation*}
=2^{-1}x^b\sum_{\frac{x}{2}<n<x-1/2}\Lambda(n)n^{-b}
+2^{b}\sum_{\frac{x}{a}<n<x-1/2}\frac{\Lambda(n)}{(xn^{-1}-1)}.
\end{equation*}
We can now bound the right-hand side sum using Lemma \ref{lemma:ll}.
The second term of \eqref{eq:N}, remembering that $x=N+1/2$, is
\begin{equation*}
\sum_{x-1/2\le n<2x}\Lambda(n)\frac{(xn^{-1})^b}{|\log(xn^{-1})|}\le \log(x-1/2) \frac{(x/(x-1/2))^b}{|\log(x/(x-1/2))|}
\end{equation*}
\begin{equation*}
+\log(x+1/2) \frac{(x/(x+1/2))^b}{|\log(x/(x+1/2))|}+\sum_{x+1.5\le n<2x}\frac{\Lambda(n)}{|\log(xn^{-1})|}.
\end{equation*}
We can bound the right-hand side sum using that, for $0\le x \le 1$, $|\log (1-x)| \ge x$ and Lemma \ref{lemma:ll}.
We thus obtain an upper bound for \eqref{eq:N} and conclude the proof.
\end{proof}
Thinking about possible improvements it is interesting to note that in the above result it would be possible to obtain a slightly better result splitting \eqref{eq:N} at a point different than $2$, but this would require proving a customized variation of Lemma \ref{lemma:ll}.
We can now prove Lemma \ref{lemma:main}, that proceeds similarly to Theorem 8 in \cite{Liu-Wang} and Lemma 2.1 in \cite{Yamada}.
\begin{theorem}
\label{the:func}
Let $\chi$ be a Dirichlet character modulo $q$, $c\ge 1$ and  $x \ge 1126$, then 
\begin{align}
\label{eq:imp}
|\psi(x,\chi)-\delta(\chi)x|\le  \sum_{\rho \in z(\chi),|\gamma|\le T} \frac{x^{\beta}}{|\rho|}+R^*(x,T,q),
\end{align}
with
\begin{align*}
R^*(x,T,q):=& \frac{\log q}{\log 2}+|R_2(T,x)|+|R_3(T,x)|+\log 2+R_5(T,x)+R_7(T,q,x)+\\&+R_8(T,q,x)
+\log x+\frac{x}{T-1}r_4(T,q)+R_{11}(T,q,x),
\end{align*}
\begin{align*}
r_4(T,q):=\frac{T+1}{\pi}\log \frac{q(T+1)}{2 \pi e}-\frac{T-1}{\pi}\log \frac{q(T-1)}{2 \pi e}+r_1(T+1,q)+r_1(T-1,q),
\end{align*}
\begin{align*}
|R_3(x,T)|\le & \frac{1}{\pi T \log 2}\Big(\log x - C + \frac{0.1877}{\log x} +2^{1+\frac{1}{\log x}}(x\log x+1.5x-1/2)\log x \\ &+x(\log x +\log 2 +2)\log 2x \Big),
\end{align*}
\begin{align*}
&R_5(T,x):=\frac{1}{2\pi}\left(\log x-C+\frac{0.1877}{\log x}\right)(T+1)\exp \left( \frac{\log(x+1/2)}{\log x}\right),
\end{align*}
\begin{align*}
r_5(x,\sigma,q):=&(2-\sigma)\cdot \\& \cdot \left(1.75\log (qx)+\frac{1}{2.5+x^2}+\frac{1}{x(2.25+x^2)^{1/2}}+\frac{\pi}{4x}+3.31 \right)+0.62,
\end{align*}
\begin{align*}
 R_7(T,q,x):= & \frac{(1.5+\frac{1}{\log x})(ex+2.5^{1+\frac{1}{\log x}})}{2\pi(T-1)}\cdot\\& \cdot \left( r_4(T+1,q)\left(r_4(T+1,q)+1\right)+\max_{T-1\le x \le T+1}r_5(x,-1/2,q)\right),
\end{align*}
\begin{align*}
r_6(x):=\left(1.75\log (2+|x|)+\frac{1}{2.5+x^2}+2.43+\frac{1}{((4+x^2)(0.25+x^2))^{1/2}}\right)+0.62,
\end{align*}
\begin{align*}
R_8(T,q,x):=& \frac{\left(\frac{1}{\sqrt{x}}+\frac{1}{\sqrt{2.5}} \right)(1.5+\frac{1}{\log x})}{2\pi(T-1)}\left( 2r_4(T+1,q)+\max_{-(T+1)\le x \le T+1}r_6(x)\right),
\end{align*}
\begin{align*}
R_{11}(T,q,x):=& (\sqrt{x}+2)\frac{\sqrt{q} \log^2q}{100}+(\sqrt{x}+1)\Big(R_0r_1(q,1)\log q+\frac{r_1(q,T+1)}{T+1}\\& +\frac{\log T}{\pi}\log \frac{q\sqrt{T}}{2\pi e}+r_1(q,T)\Big).
\end{align*}
\end{theorem}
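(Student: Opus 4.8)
The plan is the classical truncated Perron formula followed by a leftward contour shift, carried out quantitatively as in Theorem~8 of \cite{Liu-Wang} and Lemma~2.1 of \cite{Yamada}, but fed by the sharper truncation estimate of Lemma~\ref{lemma:CW1}. First one reduces to $x=N+1/2$ for a positive integer $N$: a general $x\ge1126$ lies within $1/2$ of such a point, and $\psi(\cdot,\chi)$ moves by at most $\sum_{|n-x|\le1/2}\Lambda(n)$ across that gap, a quantity absorbed into the $\log 2$ and $\log x$ summands of $R^*$. Write $f(s)=\sum_{n\ge1}\Lambda(n)\chi(n)n^{-s}=-\tfrac{L'}{L}(s,\chi)$ for $\Re s>1$ and take $b=1+1/\log x$. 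The proof of Lemma~\ref{lemma:CW1} goes through verbatim with $f(s)$ and $\psi(x,\chi)$ in place of $\sum_n\Lambda(n)n^{-s}$ and $\sum_{n\le x}\Lambda(n)$, its error bound being unchanged since $|\chi(n)|\le1$, so that
\begin{equation*}
\psi(x,\chi)=\frac{1}{2\pi i}\int_{b-iT}^{b+iT}\Bigl(-\frac{L'}{L}(s,\chi)\Bigr)\frac{x^s}{s}\,ds+R_1(x,T,b),
\end{equation*}
and one makes the bound on $R_1$ explicit by inserting $\sum_n\Lambda(n)n^{-b}=-\tfrac{\zeta'}{\zeta}(b)\le\log x-C+0.1877/\log x$ from \eqref{eq:z'/z} together with the tail estimate \eqref{eq:Lambdan}; this repackages $R_1$ into the summands $|R_2|$ and $|R_3|$.

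Next I would shift the segment $\Re s=b$ to the line $\Re s=-1/2$, closing the contour with horizontal segments at heights $\pm T$. The pole at $s=1$ is crossed only when $\chi$ is principal, and its residue is the main term $\delta(\chi)x$; each zero $\rho=\beta+i\gamma$ of $L(\cdot,\chi)$ with $|\gamma|\le T$ is crossed, with residue $-x^\rho/\rho$; and the pole at $s=0$ is crossed, of order at most $\omega(q)+1\le\log q/\log 2+1$ (imprimitive characters carrying extra simple zeros from the Euler factors at $p\mid q$), its residue accounting for the $\tfrac{\log q}{\log 2}$, $\log 2$ and $\log x$ summands of $R^*$. Splitting the zero residues by $\beta>1/2$ versus $\beta\le1/2$: the first group is exactly $\sum_{\rho\in z(\chi),\,|\gamma|\le T}x^\rho/\rho$, bounded in modulus by $\sum_{\rho\in z(\chi),\,|\gamma|\le T}x^\beta/|\rho|$; the second group, together with the imaginary-axis zeros of the imprimitivity factors, is at most $\sqrt{x}\sum_{|\gamma|\le T}|\rho|^{-1}$, which one estimates with the zero count of Lemma~\ref{lemma:N} and, for the contribution near $s=1$ from a possible exceptional zero $\beta_0$, the bound $1/(1-\beta_0)\le\sqrt{q}\log^2 q/100$ of Lemma~\ref{lemma:B}; all of this goes into $R_{11}$, along with the integral over $\Re s=-1/2$, which is of size $\sqrt{x}$ times $\log(qT)$-type factors once the functional equation is used to express $\tfrac{L'}{L}(-1/2+it,\chi)$ through $\tfrac{L'}{L}(3/2-it,\overline{\chi})$ and the $\Gamma$-factors are bounded.

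It remains to bound the horizontal integrals at heights $\pm T$. For these I would use the Hadamard (partial-fraction) expansion of $\tfrac{L'}{L}(s,\chi)$ to obtain an explicit majorant for $|\tfrac{L'}{L}(\sigma\pm iT,\chi)|$ in terms of the number of zeros within unit distance of height $T$ and of $\log\bigl(q(T+2)\bigr)$; the zero count is controlled by $r_4(T,q)$ through Lemma~\ref{lemma:N}, while the $\Gamma$-factor contributions produce the auxiliary functions $r_5$ and $r_6$, and integrating $x^\sigma/|\sigma\pm iT|$ over $\sigma\in[-1/2,b]$ then yields the summands $R_5$, $R_7$ and $R_8$. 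Because $T$ is an arbitrary parameter rather than one chosen to avoid zeros, the zeros lying within distance $1$ of height $T$ cannot be absorbed smoothly and are pulled out and estimated crudely, contributing $\tfrac{x}{T-1}r_4(T,q)$. Assembling the main term, the displayed zero sum and all the error pieces and applying the triangle inequality yields \eqref{eq:imp}.

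The main obstacle is this last step: producing clean, fully explicit, $q$- and $T$-uniform upper bounds for $|\tfrac{L'}{L}(s,\chi)|$ on the horizontal lines and on $\Re s=-1/2$, while correctly isolating the behaviour near $s=1$ caused by a possible exceptional zero (via Lemma~\ref{lemma:B}) and correctly accounting for the trivial zeros and the pole at $s=0$ of the imprimitive $L$-functions. Everything else is the standard contour-shift bookkeeping rendered quantitative.
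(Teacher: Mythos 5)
Your overall skeleton (Perron via Lemma \ref{lemma:CW1} with $b=1+1/\log x$, shift to $\Re s=-1/2$, horizontal segments controlled through zero counts near height $T$, zeros with $\beta<1/2$ and the vertical line absorbed into an $R_{11}$-type term of size $\sqrt{x}$ times $\log(qT)$ factors) matches the paper's proof, which indeed follows Theorem 8 of \cite{Liu-Wang} with Lemma \ref{lemma:CW1} replacing the old truncation lemma. But there is a genuine gap in how you treat $s=0$. You keep the kernel $x^s/s$ and claim the residue at $s=0$ is controlled by the order of the zero of $L(s,\chi)$ there (at most $\omega(q)+1$) and accounts for the $\frac{\log q}{\log 2}+\log 2+\log x$ part of $R^*$. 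That is not right as stated: writing $-\frac{L'}{L}(s,\chi)=-\frac{m}{s}+c_0(\chi)+\cdots$ near $s=0$ and $\frac{x^s}{s}=\frac1s+\log x+\cdots$, the residue is $-m\log x+c_0(\chi)$, and the Laurent constant $c_0(\chi)$ (essentially an $\frac{L'}{L}(0,\chi)$ / $B(\chi)$-type quantity) is not bounded by the pole order; making it explicit and $q$-uniform is a serious task in its own right, tied to lower bounds for $L(1,\chi)$, and none of the lemmas you cite control it. The paper avoids this entirely by the Chen--Wang/Liu--Wang device you do not mention: it forms $\psi(x,\chi)-\psi(2.5,\chi)$ so that the kernel becomes $(x^s-2.5^s)/s$, which is regular at $s=0$; the residue there is then at most $\log x$ in absolute value (the $\theta\log x$ term), and the price is only the $|R_3|$, $\log 2$ (from $|\psi(2.5,\chi)|\le\log2$) and $R_5$ summands, which is exactly how those pieces of $R^*$ arise.

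A second, related problem is your decision to work with imprimitive $\chi$ directly. The paper first replaces $\chi$ by the inducing primitive character at a cost of $\frac{\log q}{\log 2}$ (that is where this term in $R^*$ comes from), and only then shifts the contour. If instead you keep the imprimitive $L$-function, the Euler factors at $p\mid q$ contribute roughly $\frac{T\log p}{\pi}$ zeros on $\Re s=0$ up to height $T$ for each such $p$, and their residues sum to a quantity of order $\log q\log T$, which with $T=\log^{\alpha}x$ exceeds the $\frac{\log q}{\log 2}$ budget you assign to them; moreover Lemma \ref{lemma:N} and the Chen--Wang bounds for $\frac{L'}{L}$ that feed $r_4,r_5,r_6$ are stated for primitive characters, so your horizontal and vertical estimates would not apply verbatim. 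Repairing your sketch therefore requires inserting the reduction \eqref{eq:prim} to the primitive character at the outset and the $\psi(2.5,\chi)$ subtraction (or an equivalent explicit bound on the Laurent constant at $s=0$); with those two ingredients the rest of your bookkeeping lines up with the paper's $R_2,\dots,R_{11}$.
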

\begin{proof}
Aside for the result in Lemma \ref{lemma:CW1} the proof is a more general version of Theorem 8 in \cite{Liu-Wang}.
If $\chi \pmod q$ is induced by the primitive character $\chi_1 \pmod {q_1}$ we have,
\begin{equation}
\label{eq:prim}
|\psi(\chi,x)-\psi(\chi_1,x)|\le \frac{\log q}{\log 2},
\end{equation}
and thus, taking note of the above error term, we can focus on primitive characters. Note that if $\chi=\chi_0$, then the following argument holds with $\psi(\chi,x)$ replaced by $\psi(\chi,x)-x$.
By Corollary 2.1 in \cite{Bennett1}, we have
\begin{equation}
\label{eq:TT0}
\sum_{|\gamma-T|\le 1}1\le  r_4(T,q),
\end{equation} 
thus it is easy to see that there exist a real $T_0$, such that $|T-T_0|\le 1$ and 
\begin{equation}
\label{eq:TT1}
\frac{1}{|\gamma-T_0|}\le r_4(T,q)+1 ,
\end{equation}
for any non-trivial zero $\rho=\beta +i\gamma$ of $L(s,\chi)$.
Defining $x_0=\lfloor x \rfloor+1/2$, we have
\begin{equation*}
\psi(x,\chi)=\psi(x_0,\chi).
\end{equation*}
We have
\begin{equation}
\psi(x,\chi)=\frac{1}{2\pi i}\int_{b-iT_0}^{b+iT_0}\left(-\frac{L'}{L}(s,\chi) \right)\frac{x_0^s}{s}ds+R_2(x,T),
\end{equation}
with $R_2(x,T)$ with an explicit upper bound given by Lemma \ref{lemma:CW1}, with $b=1+\log^{-1}x$, and \eqref{eq:z'/z}.
By Lemma 1 in \cite{Chen-Wang} and always using \eqref{eq:z'/z}, we obtain
\begin{equation}
\psi(2.5,\chi)=\frac{1}{2\pi i}\int_{b-iT_0}^{b+iT_0}\left(-\frac{L'}{L}(s,\chi) \right)\frac{2.5^s}{s}ds+R_3(x,T).
\end{equation}
Considering the difference between $\psi(x,\chi)$ and $\psi(2.5,\chi)$, we obtain
\begin{equation}
\label{eq:diff}
\psi(x,\chi)=\frac{1}{2\pi i}\int_{b-iT_0}^{b+iT_0}\left(-\frac{L'}{L}(s,\chi) \right)\frac{x_0^s-2.5^s}{s}ds+R_4(x,T),
\end{equation}
with, observing that $|\psi(2.5,\chi)|\le \log 2$, $|R_4(x,T)|\le |R_2|+|R_3|+\log 2$. 
The difference between the main term of the right-hand side of \eqref{eq:diff} and its analogue with $x_0$ replaced by $x$ is
\begin{equation*}
\le \frac{1}{2\pi}\int_{-T_0}^{T_0}\left|\frac{L'}{L}(b+iu,\chi) \right| \left| \int_{x_0}^{2.5}x^{b-1+iu}dx\right|du \le R_5(T,x),
\end{equation*}
in the last step we used \eqref{eq:z'/z}, $b=1+\log^{-1} x$ and $|T_0-T|\le 1$.
Thus \eqref{eq:diff} becomes
\begin{equation}
\label{eq:diff1}
\psi(x,\chi)=\frac{1}{2\pi i}\int_{b-iT_0}^{b+iT_0}\left(-\frac{L'}{L}(s,\chi) \right)\frac{x^s-2.5^s}{s}ds+R_6(x,T),
\end{equation}
with $|R_6(x,T)|\le |R_4|+R_5$.
We now take $\Omega$ to be a rectangle with vertices $b\pm iT_0$ and $1/2 \pm iT_0$. By Cauchy's residue theorem we obtain
\begin{equation}
\label{eq:Cauchy}
\frac{1}{2\pi i} \int_{\Omega} \left( -\frac{L'}{L}(s,\chi)\right)\frac{x^s-2.5^s}{s}ds=-\sum_{|\gamma|\le T_0} \frac{x^{\varrho}-2.5^{\varrho}}{\varrho}+\theta\log x,
\end{equation}
with $|\theta| \le 1$.
By  Lemma 9 in \cite{Chen-Wang}, \eqref{eq:TT0} and \eqref{eq:TT1}, for $-1/2 \le \sigma \le b$, we obtain
\begin{equation*}
\left|\frac{L'}{L}(\sigma +iT_0,\chi)\right| \le r_4(T+1,q)\left(r_4(T+1,q)+1\right)+r_5(T_0,\sigma,T,q).
\end{equation*}
Also by (9') in \cite{Chen-Wang} and \eqref{eq:TT0}
\begin{equation*}
\left|\frac{L'}{L}(-1/2 +ix,\chi)\right| \le 2r_4(T,q)+r_6(x, T).
\end{equation*}
Hence 
\begin{equation*}
\left|\frac{1}{2\pi i} \int_{-1/2 \pm iT_0}^{b\pm iT_0} \left( -\frac{L'}{L}(s,\chi)\right)\frac{x^s-2.5^s}{s}ds\right| \le R_7(T,q,x)
\end{equation*}
and
\begin{equation*}
\left|\frac{1}{2\pi i} \int_{-1/2 - iT_0}^{-1/2 + iT_0} \left( -\frac{L'}{L}(s,\chi)\right)\frac{x^s-2.5^s}{s}ds \right|\le R_8(T,q,x).
\end{equation*}
Thus by \eqref{eq:diff1} and \eqref{eq:Cauchy} 
\begin{equation}
\label{eq:sum}
\psi(t,\chi)=-\sum_{|\gamma|\le T_0} \frac{x^{\varrho}-2.5^{\varrho}}{\varrho}+R_9(T,q,x)
\end{equation}
with $|R_9(T,q,x)|\le |R_6|+R_7+R_8+\log x$.
By \eqref{eq:TT0}, the difference between $\sum_{|\gamma|\le T_0} \frac{x^{\varrho}}{\varrho}$ and $\sum_{|\gamma|\le T} \frac{x^{\varrho}}{\varrho}$
 is $\le \frac{x}{T-1}r_4(T,q)$. Thus \eqref{eq:sum} can be rewritten as
 \begin{equation}
\label{eq:sum1}
\psi(t,\chi)=-\sum_{|\gamma|\le T} \frac{x^{\varrho}}{\varrho}+\sum_{|\gamma|\le T_0} \frac{2.5^{\varrho}}{\varrho}+R_{10}(T,q,x),
\end{equation}
 with $|R_{10}(T,q,x)|\le |R_9(T,q,x)|+\frac{x}{T-1}r_4(T,q)$.
It is easy to see that 
  \begin{equation}
\label{eq:sum2}
\left|\sum_{|\gamma|\le T_0} \frac{2.5^{\varrho}}{\varrho}\right|\le 2.5\sum_{|\gamma|\le T+1} \frac{1}{|\varrho|}\le 2.5 \left(\sum_{|\gamma|\le 1} \frac{1}{\mathbf{Re} (\varrho)}+\sum_{1<|\gamma|\le T+1} \frac{1}{|\varrho|} \right).
\end{equation}
Now using  Lemma \ref{lemma:B} to bound the possible two Siegel zeros, Theorem \ref{theo:kadiri} to bound the other zeroes and Lemma \ref{lemma:N}, we obtain
\begin{equation}
\label{eq:<1}
\sum_{|\gamma|\le 1} \frac{1}{\mathbf{Re} (\varrho)}\le \frac{\sqrt{q} \log^2q}{50}+R_0r_1(q,1)\log q .
\end{equation}
Furthermore, by Lemma \ref{lemma:N}
\begin{equation}
\label{eq:>1}
\sum_{1<|\gamma|\le T+1} \frac{1}{|\varrho|}\le \frac{r_1(q,T+1)}{T+1}+\int_1^{T+1} \frac{r_1(q,y)}{y^2}dy.
\end{equation}
We can also see that
\begin{equation}
\label{eq:sum3}
\sum_{|\gamma|\le T, \beta <1/2} \frac{x^{\varrho}}{\varrho}=\sum_{|\gamma|\le 1, \beta <1/2} \frac{x^{\varrho}}{\varrho}+\sum_{1<|\gamma|\le T+1, \beta <1/2} \frac{x^{\varrho}}{\varrho}.
\end{equation}
Similarly to \eqref{eq:<1}, as there is only one possible Siegel zero in this range, we have
\begin{equation}
\label{eq:<11}
\sum_{|\gamma|\le 1, \beta <1/2} \frac{x^{\sigma}}{|\varrho|}\le \sqrt{x}\left(\frac{\sqrt{q} \log^2q}{100} +R_0r_1(q,1)\log q\right)
\end{equation}
and similarly to \eqref{eq:>1}
\begin{equation}
\label{eq:>11}
\sum_{1<|\gamma|\le T+1, \beta <1/2} \frac{x^{\sigma}}{|\varrho|} \le \sqrt{x}\left( \frac{r_1(q,T+1)}{T+1}+\int_1^{T+1} \frac{r_1(q,y)}{y^2}dy\right).
\end{equation}
Here we can note that
\begin{equation*}
\int_1^{T+1} \frac{r_1(q,y)}{y^2}dy\le \frac{\log T}{\pi}\log \frac{q\sqrt{T}}{2\pi e}+r_1(T,q).
\end{equation*}
Thus by \eqref{eq:sum1}, \eqref{eq:sum2} together with \eqref{eq:<1} and \eqref{eq:>1} and \eqref{eq:sum2} together with \eqref{eq:<11} and \eqref{eq:>11}, we obtain
\begin{equation}
\label{eq:fin}
\left|\psi(t,\chi)+\sum_{|\gamma|\le T, \beta \ge 1/2} \frac{x^{\varrho}}{\varrho}\right| \le |R_{10}| + R_{11}(T,q,x).
\end{equation}
This concludes the proof.
\end{proof}
We can now easily prove Lemma \ref{lemma:main}.
\begin{proof}(Lemma \ref{lemma:main})
The result follows easily by Theorem \ref{the:func}, the choices for $q, T, x$ done in Lemma \ref{lemma:main} and simple computations. We also used that
\begin{align*}
\max_{T-1\le x \le T+1}&r_5(x,-1/2,q) \le 2.5\Big(1.75\log (q(T+1))+\frac{1}{2.5+(T-1)^2}\\&+\frac{1}{(T-1)(2.25+(T-1)^2)^{1/2}}+\frac{\pi}{4\cdot (T-1)}+3.31 \Big)+0.62,
\end{align*}
\begin{align*}
&\max_{-(T+1)\le x \le T+1}r_6(x)\le 2.5\left(1.75\log (2T+3)+\frac{1}{2.5}+2.43+\frac{1}{(4\cdot 0.25)^{1/2}}\right)+0.62.
\end{align*}
\end{proof}
\section{Proof of Theorem \ref{theo:1}}
\label{sec:theo1}
Note that by Theorem 3.6 in \cite{McCurley1}, \eqref{eq:1} and \eqref{eq:2} are equivalent and we will thus focus on proving \eqref{eq:1}.
We set
\begin{equation*}
\Sigma=\sum_{\chi} \sum_{\rho \in z(\chi), |\lambda|\le T}\frac{x^{\beta-1}}{|\rho|},
\end{equation*}
Now we need to bound $\Sigma$ and to do this successfully we split the sum as follows.
For $H> 1$ and $R>0$, we define
\begin{equation*}
z_0(\chi, H,R)=\sharp \lbrace \rho \mid \frac{1}{2}\le \beta \le 1-\frac{1}{R\log qH}, |\gamma|<H, L(\rho,\chi)=0 \rbrace
\end{equation*}
and
\begin{equation*}
z_1(\chi, H,R)=\sharp \lbrace \rho \mid \frac{1}{R_0} \le (1-\beta)\log qH \le \frac{1}{R}, |\gamma|<H, L(\rho,\chi)=0 \rbrace.
\end{equation*}
We define
\begin{equation*}
\Sigma_0=\sum_{\chi} \sum_{\rho \in z_0(\chi, T, R)}\frac{x^{\beta-1}}{|\rho|} \quad \text{and} \quad
\Sigma_1=\sum_{\chi} \sum_{\rho \in z_1(\chi, T, R)}\frac{x^{\beta-1}}{|\rho|}.
\end{equation*}
This gives us 
\begin{equation*}
\Sigma= \Sigma_0+\Sigma_1+E_0\frac{x^{\beta_0-1}}{\beta_0}.
\end{equation*}
It is easy to see that
\begin{equation*}
\sum_{\rho \in z_0(\chi, T, R)}\frac{x^{\beta-1}}{|\rho|} \le \frac{1}{2}x^{-1/R\log qT}\left( 2N(\chi,1)+\int_1^T \frac{dN(\chi,t)}{t}\right)
\end{equation*}
and thus, by Lemma \ref{lemma:N},
\begin{equation}
\label{eq:s0}
\Sigma_0\le \frac{qS(T,q)}{2}x^{-1/R\log qT},
\end{equation}
with
\begin{align*}
S(T,q)=\frac{1}{\pi}\left((2+\log T)\log \frac{q}{2\pi e}+T(\log T-1)+1 \right)+2r_1(q,1)+\log T r_1(q,\sqrt{T}).
\end{align*}
We are now left with estimating $\Sigma_1$. We start with the following easy estimate, that is Lemma 2.2 in \cite{Yamada} with the corrected upper bound. In Lemma 2.2 the condition $\exp \sqrt{\frac{\log x}{R_0}} \le (qT)^{1/R_0\lambda}$ is misstated as $\exp \sqrt{\frac{x}{R_0}} \le (qT)^{1/R_0\lambda}$.
\begin{lemma}
\label{lemma:mu}
Let $\rho=\beta+i \gamma$ de a zero of $L(s,\chi)$, with $\beta <1-1/R_0\log q|\gamma|$, $|\gamma|\le T$ and $\rho\neq 0$. Let $\lambda$ be such that $\beta=1-\lambda/\log qT$, then we have
\begin{equation}
\label{eq:3ways}
 \left| \frac{x^{\rho-1}}{\rho}\right| \le \mu(\lambda):= \begin{cases}
    \frac{x^{-1/R_0\log q}}{1-1/R_0\log q} & \text{if}~ |\gamma|\le 1,\\
     q \exp-2\sqrt{\frac{\log x}{R_0}} & \text{if}~ |\gamma|>1 ~\text{and}~ \exp \sqrt{\frac{\log x}{R_0}} < (qT)^{1/R_0\lambda},\\
     q\frac{x^{-\lambda/\log qT}}{(qT)^{1/R_0\lambda}} & \text{otherwise}.
  \end{cases}
\end{equation}
\end{lemma}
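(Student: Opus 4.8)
The plan is to treat the three ranges of $|\gamma|$ separately, in each case bounding $|x^{\rho-1}/\rho|$ by controlling $|x^{\rho-1}| = x^{\beta-1}$ from the hypothesis $\beta = 1 - \lambda/\log qT$ and $|\rho|$ from below. First, since $\beta < 1$, we always have $x^{\beta-1} = x^{-\lambda/\log qT} \le 1$, and the zero-free region hypothesis $\beta < 1 - 1/R_0\log q|\gamma|$ gives $\lambda/\log qT > 1/R_0\log q|\gamma|$, i.e.\ $\lambda > \log qT / R_0 \log q|\gamma|$, which will be the lever for the second and third cases.

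For $|\gamma| \le 1$: here $|\rho| \ge \beta \ge 1 - 1/R_0\log q$ (using $|\gamma|\le 1$ so that $\log q|\gamma| \le \log q$, whence $\beta \ge 1 - 1/R_0\log q$, treating $\log q|\gamma|$ suitably — one takes the worst case $|\gamma|$ near $1$), and $x^{\beta-1} \le x^{-1/R_0\log q}$ by the same comparison. Combining gives the first branch $\mu(\lambda) = x^{-1/R_0\log q}/(1 - 1/R_0\log q)$. For $|\gamma| > 1$: now $|\rho| \ge |\gamma| \ge 1$, so $|x^{\rho-1}/\rho| \le x^{\beta-1} = x^{-\lambda/\log qT}$, and moreover $x^{-\lambda/\log qT} = (qT)^{-\lambda/R_0\log qT}\cdot x^{-\lambda/\log qT}(qT)^{\lambda/R_0\log qT}$; more cleanly, write $x^{\beta - 1} \le q \cdot x^{-\lambda/\log qT}/(qT)^{1/R_0\lambda}$ whenever $x^{-\lambda/\log qT}(qT)^{1/R_0\lambda} \le q$. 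This last inequality, after taking logs, reads $-\lambda \log x/\log qT + (\log qT)/R_0\lambda \le \log q$, and one splits on whether the left side's two terms balance: when $\exp\sqrt{\log x/R_0} \ge (qT)^{1/R_0\lambda}$ one gets the third branch directly, and when $\exp\sqrt{\log x/R_0} < (qT)^{1/R_0\lambda}$ one instead optimizes — the exponent $-\lambda\log x/\log qT$ combined with the zero-free bound $\lambda > \log qT/R_0\log q|\gamma|$... but here one uses the AM–GM-type estimate $\lambda \log x/\log qT \ge 2\sqrt{\log x/R_0}\cdot\sqrt{1/\lambda^2 \cdot \lambda^2}$... more precisely, the product $(\lambda\log x/\log qT)\cdot((\log qT)/R_0\lambda) = \log x/R_0$ is fixed, so $x^{-\lambda/\log qT} \le (qT)^{1/R_0\lambda}\cdot q\exp(-2\sqrt{\log x/R_0})$ by AM–GM on the exponents, giving the second branch $q\exp(-2\sqrt{\log x/R_0})$ in that sub-range.

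The key identity driving the $|\gamma|>1$ cases is that $\beta - 1 = -\lambda/\log qT$ while simultaneously the definition of $z_1$-type zeros and the zero-free region force $\lambda$ not too small relative to $\log qT/\log q|\gamma|$; the product of the two "competing" exponents $\lambda \log x/\log qT$ and $(\log qT)/(R_0\lambda)$ equals the $\lambda$-free quantity $\log x/R_0$, so $x^{-\lambda/\log qT}(qT)^{1/R_0\lambda} = \exp\big(-\lambda\log x/\log qT + (\log qT)/(R_0\lambda)\big)$ is, by AM–GM, at most $\exp\big(-2\sqrt{\log x/R_0}\big)$ in the regime where the first exponent dominates, and is handled by a direct monotonicity argument otherwise. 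I expect the main obstacle to be bookkeeping the exact break-point $\exp\sqrt{\log x/R_0} \lessgtr (qT)^{1/R_0\lambda}$ and checking that the factor $q$ (rather than $1$) is enough slack to absorb $(qT)^{1/R_0\lambda}$ and the $1/|\rho|$ loss uniformly; this is where the misstatement noted in the text (the condition should involve $\sqrt{\log x/R_0}$, not $\sqrt{x/R_0}$) matters, and one must verify the corrected inequality is the one that actually makes the case split consistent. Everything else is elementary estimation of $|\rho|$ from below by $\max\{\beta, |\gamma|\}$ and of $x^{\beta-1}$ by the formula $\beta = 1 - \lambda/\log qT$.
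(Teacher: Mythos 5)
There is a genuine gap: in both $|\gamma|>1$ branches you only ever use the trivial bound $|\rho|\ge 1$, and with that alone the stated inequalities cannot be reached. Writing $u=\lambda\log x/\log qT$ and $v=\log qT/(R_0\lambda)$, your AM--GM on the pair $(u,v)$ gives $x^{-\lambda/\log qT}\le (qT)^{1/(R_0\lambda)}e^{-2\sqrt{\log x/R_0}}$; but in the second-branch regime one has $(qT)^{1/(R_0\lambda)}>e^{\sqrt{\log x/R_0}}$, so this factor is huge and does not reduce to $q\,e^{-2\sqrt{\log x/R_0}}$. Similarly, your ``direct'' third-branch step would need $(qT)^{1/(R_0\lambda)}\le q$ (note your ``whenever'' condition is $v-u\le\log q$, whereas the inequality you actually invoke is $v\le\log q$, and even that would only yield $q(qT)^{-1/(R_0\lambda)}$, missing the essential decay factor $x^{-\lambda/\log qT}$); since $\lambda\in[1/R_0,1/R]$ the exponent $1/(R_0\lambda)$ is of order one and $(qT)^{1/(R_0\lambda)}\gg q$ for large $T$. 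The missing idea, which is the paper's mechanism, is to exploit the size of $\gamma$ itself, through $1/|\rho|\le 1/\gamma$ \emph{and} the $\gamma$-dependent zero-free region $\beta<1-1/(R_0\log q\gamma)$: for the second branch one applies AM--GM in the variable $\log q\gamma$, namely $\tfrac{\log x}{R_0\log q\gamma}+\log\gamma\ge 2\sqrt{\log x/R_0}-\log q$, giving $x^{\beta-1}/\gamma\le q\,e^{-2\sqrt{\log x/R_0}}$; for the third branch one sets $\gamma_1=(qT)^{1/(R_0\lambda)}/q$ and $\gamma_0=q^{-1}\exp\sqrt{\log x/R_0}$ (so $\gamma_0\ge\gamma_1$ in that case), uses that $\gamma\mapsto x^{-1/(R_0\log q\gamma)}/\gamma$ is increasing up to $\gamma_0$ to evaluate at $\gamma_1$ when $\gamma\le\gamma_1$, and uses $x^{\beta-1}=x^{-\lambda/\log qT}$ together with $1/\gamma\le 1/\gamma_1$ when $\gamma>\gamma_1$. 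Your closing remark about ``a direct monotonicity argument'' points in this direction, but the monotonicity needed is in $\gamma$ of the combined function $x^{-1/(R_0\log q\gamma)}/\gamma$, and none of the steps you actually wrote introduces $\gamma$ into the estimate.

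The case $|\gamma|\le 1$ is also flawed as written: from $\log(q|\gamma|)\le\log q$ the hypothesis gives the \emph{upper} bound $\beta<1-1/(R_0\log q)$, so your claimed lower bound $\beta\ge 1-1/(R_0\log q)$ has the inequality reversed, and you are using $\beta$ on both sides of the same threshold. The correct route (the paper's) is to note that $\beta\mapsto x^{\beta-1}/\beta$ is increasing for $\beta\ge 1/\log x$, which holds because $\beta\ge 1/(R_0\log q)\ge 1/\log x$ by the symmetric zero-free region coming from Theorem \ref{theo:kadiri}; one may then substitute the endpoint $\beta=1-1/(R_0\log q)$ to obtain the first branch. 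So the overall architecture you describe (three cases, an AM--GM step, a monotonicity step, and the corrected break-point condition) matches the paper, but each of the three concrete derivations needs to be repaired along the lines above.
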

\begin{proof}
When $|\gamma| \le 1$ we obtain
\begin{equation*}
\left| \frac{x^{\rho-1}}{\rho}\right| \le  \frac{x^{\beta-1}}{\beta}\le \frac{x^{-1/R_0\log q}}{1-1/R_0\log q} ,
\end{equation*}
using in the last step that $ \beta \ge \frac{1}{R_0 \log q}\ge \frac{1}{\log x}$, which follows from Theorem \ref{theo:kadiri}.\newline
We may now assume $\gamma \ge 0$. If $\gamma\ge 1$, observing that
\begin{equation*}
\frac{1}{R_0 \log q\gamma}+\frac{\log \gamma}{\log x}\ge 2 \sqrt{\frac{1}{R_0 \log x}}-\frac{\log q}{\log x},
\end{equation*}
we obtain
\begin{equation*}
\left| \frac{x^{\rho-1}}{\rho}\right| \le  x^{\beta-1}\le q \exp-2\sqrt{\frac{\log x}{R_0}}.
\end{equation*}
Let $\gamma_0=\frac{1}{q}\exp \sqrt{\frac{\log x}{R_0}}$ and $\gamma_1=(q)^{1/R_0\lambda-1}(T)^{1/R_0\lambda}$. We may assume $t_0\ge t_1$. In the case $\gamma \le \gamma_1$ we have
\begin{equation*}
\frac{x^{1/R_0\log q \gamma}}{\gamma}\le \frac{x^{1/R_0\log q \gamma_1}}{\gamma_1}=q\frac{x^{-\lambda/\log qT}}{(qT)^{1/R_0\lambda}},
\end{equation*}
since $\frac{x^{1/R_0\log q \gamma}}{\gamma}$ is increasing below $\gamma_0$. In the other case we easily have
\begin{equation*}
\frac{x^{\lambda/\log qT \gamma}}{\gamma}\le \frac{x^{\lambda/\log qT \gamma}}{\gamma_1}=q\frac{x^{-\lambda/\log qT}}{(qT)^{1/R_0\lambda}}.
\end{equation*}
\end{proof}
We now split $\Sigma_1$ as follows
\begin{equation}
\label{eq:s1split}
\Sigma_1=\sum_{\chi} \sum_{\rho \in z_1(\chi, T, R), |\gamma| \le 1}\frac{x^{\beta-1}}{|\rho|}+\sum_{\chi} \sum_{\rho \in z_1(\chi, T, R), |\gamma| \ge 1}\frac{x^{\beta-1}}{|\rho|}.
\end{equation}
By Lemma \ref{lemma:N}, we obtain
\begin{equation}
\label{eq:s11}
\sum_{\chi} \sum_{\rho \in z_1(\chi, T, R), |\gamma| \le 1}\frac{x^{\beta-1}}{|\rho|}\le \left(\frac{1}{\pi}\log \frac{q}{2\pi e}+ r_1(q,1)\right) \frac{x^{-1/R_0\log q}}{1-1/R_0\log q}.
\end{equation}
Now let $p_i=\beta_i+i\gamma_i$ be all zeroes of $\prod (s,q)$ with $\beta_i=1-\lambda_i/\log qT$, $1\le \gamma_i \le T$ and $ 1/R_{0}<\lambda_1 \le \lambda_2 \cdots$ \newline
We now summarize some of the results in \cite{Liu-Wang} regarding zeroes of $\prod (s,q)$.
\begin{lemma}
\label{lemma:LW}
Assuming $qT \ge 8\cdot 10^{9}$, then $\lambda_1 \le \eta_i$ and $\lambda_2 > \xi_i$ for each $i=1,\cdots,12$ and $\eta_i$, $\xi_i$ from Table \ref{LW1} and $\lambda_3 \ge 0.26213$. Moreover, if $qT\ge 10^{11}$ then $\lambda_n \ge \varrho_n$, with $\varrho_n$ in Table \ref{LW2}.
\end{lemma}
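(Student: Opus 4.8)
The plan is to reproduce, in an explicit form, the zero-density and zero-spacing arguments of Liu and Wang for the product $\prod(s,q)$, which is itself an $L$-function-like object (an entire function of finite order satisfying a functional equation, with a known Hadamard factorization). First I would fix the standard input: the functional equation and Hadamard product for $\prod(s,q)$, from which one gets the usual explicit formula expressing $\sum_{\rho}\frac{1}{s-\rho}$ and $-\frac{(\prod)'}{\prod}(s,q)$ in terms of the zeros, together with the explicit count of zeros up to height $T$ coming from Lemma \ref{lemma:N} summed over the $\varphi(q)$ characters modulo $q$. The condition $qT\ge 8\cdot 10^9$ (and the stronger $qT\ge 10^{11}$) is precisely what is needed so that the error terms $r_1(q,T)$ in Lemma \ref{lemma:N} are negligible against the main term $\frac{T}{\pi}\log\frac{qT}{2\pi e}$, and so that the positivity arguments below close with explicit constants.

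The core mechanism is the classical ``detecting a zero forces others away'' trilogy of inequalities: applying the non-negativity of $3+4\cos\theta+\cos2\theta$ (the de la Vall\'ee Poussin device) to $-\mathrm{Re}\,\frac{(\prod)'}{\prod}$ at points $s=\sigma$, $s=\sigma+i\gamma_1$, $s=\sigma+2i\gamma_1$ with $\sigma$ slightly larger than $1$. If $\lambda_1$ is very small — i.e.\ $\rho_1=\beta_1+i\gamma_1$ is very close to the line $\mathrm{Re}\,s=1$ — then the $4\,\mathrm{Re}\,\frac{1}{\sigma+i\gamma_1-\rho_1}$ term is large and positive, which, after bounding the remaining sum over zeros by the density estimate from Lemma \ref{lemma:N}, contradicts the bound $-\frac{(\prod)'}{\prod}(\sigma,q)\le \frac{1}{\sigma-1}+O(\log q\gamma_1)$ unless $\gamma_1$ is tiny; pushing this quantitatively, optimizing $\sigma$ as a function of the target threshold, yields for each $i=1,\dots,12$ an explicit $\eta_i$ with $\lambda_1\le\eta_i$. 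For $\lambda_2$ one runs essentially the same argument but now records that $\rho_1$ itself contributes a term one must keep rather than discard, which gives the lower bound $\lambda_2>\xi_i$; and for $\lambda_3$ and for the general $\lambda_n\ge\varrho_n$ one iterates, at each stage subtracting off the contribution of the already-located zeros $\rho_1,\dots,\rho_{n-1}$ and re-running the positivity inequality, which is exactly where the stronger hypothesis $qT\ge 10^{11}$ is consumed. All of this is bookkeeping: the tables \ref{LW1} and \ref{LW2} are simply the outputs of these optimizations carried out numerically.

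The main obstacle is not conceptual but arithmetical: making every constant explicit and verifiable. One must track the error term $r_1(q,T)$ from Lemma \ref{lemma:N} through the partial summation that converts $\sum_{1<|\gamma|\le H}\frac{1}{|\rho-s|}$ into an integral against $dN$, control the contribution of the possible Siegel zero (Lemma \ref{lemma:B}) separately since it sits in exactly the region one is probing, and verify that with $qT\ge 8\cdot10^9$ the residual terms are genuinely small enough that the claimed $\eta_i,\xi_i$ survive; the threshold $\lambda_3\ge 0.26213$ and the entries of Table \ref{LW2} are the delicate ones, since there the gain from the positivity inequality is smallest and the numerology is tightest. Since Liu and Wang already carried out this computation, the honest statement is that the lemma follows by quoting and lightly re-packaging their Lemmas (the relevant statements in \cite{Liu-Wang}), checking only that their hypotheses are met under $qT\ge 8\cdot10^9$ and $qT\ge 10^{11}$ and transcribing the constants; no new idea is required.
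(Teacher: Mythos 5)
Your proposal is correct and ultimately coincides with the paper's proof, which consists solely of citing Theorems 1--2 and Tables 1, 3--5 of \cite{Liu-Wang}; your closing paragraph acknowledges exactly this, and the preceding sketch of the Liu--Wang machinery, while reasonable, is not needed beyond the citation.
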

\begin{proof}
See Theorem 1-2 and Table 1, 3-5 in \cite{Liu-Wang}.
\end{proof}
\begin{table}[H]
\begin{center}
\begin{tabular}{ |c|c|c|}
\hline
 $i$ & $\eta_i$ & $\xi_i$ \\ 
 \hline
 \hline
   1  & 0.16 & 0.2605 \\
   2 & 0.17 & 0.2477 \\
   3 & 0.18 & 0.2356 \\
   4 & 0.19 & 0.2242 \\
   5 & 0.20 & 0.2135 \\
   6 & 0.206 & 0.2074 \\
   7 & 0.2067 & 0.2067 \\ 
   \hline              
\end{tabular}
\end{center}
\caption{Values $\eta_i$ and $\xi_i$. \label{LW1}}
\end{table}
\begin{table}[H]
\begin{center}
\begin{tabular}{ |c||c|c|c|c|c|c|c|c|c|c|c|c|}
\hline
 $n$ & 4 & 5&6&7&10&18& 45 &91&146&332&834&7000\\ 
 \hline
  $\varrho_n$ &0.28  &0.31 &0.32& 0.33& 0.36& 0.39&0.42&0.45&0.46&0.47&0.475&0.478 \\  
\hline              
\end{tabular}
\end{center}
\caption{Size $\lambda_n$. \label{LW2}}
\end{table}
 Now using \eqref{eq:3ways} and Lemma \ref{lemma:LW}, we can easily obtain an upper bound for $\Sigma_1$, with the sum restricted to $|\gamma|\ge 1$. We are thus summing $x^{\beta-1}/|\rho|$ over the zeroes $\rho$ of $L(s,\chi)$, with $1-1/R\log qT \le \beta_i \le 1-1/R_0\log qT$ and $1 \le |\gamma_i|\le T$.
 \begin{corollary}
 Assume $qT \ge 10^{11}$, then, for some $i=1,\cdots,6$, we have
 \begin{equation}
 \label{eq:E1}
 \sum_{\chi} \sum_{\rho \in z_1(\chi, T, R), |\gamma| \ge 1}\frac{x^{\beta-1}}{|\rho|}
 \end{equation}
 \begin{equation*} \le 2 q\min_{0 \le J \le 12} \left( \mu(\eta_i)+\mu(\xi_{i+1})+\sum_{j=1}^JM_j\mu(\max\{\xi_{i+1},\nu_j \})\right),
 \end{equation*}
 with $R=R_{i,J}=1/\max\{ \xi_{i+1},\nu_{J+1}\}$ and $M_j$ and $\nu_j$ defined in Table \ref{LW3}.
 \end{corollary}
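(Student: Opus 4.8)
The plan is to bound $\sum_{\chi}\sum_{\rho\in z_1(\chi,T,R),|\gamma|\ge1}x^{\beta-1}/|\rho|$ by classifying every zero $\rho=\beta+i\gamma$ with $1-1/R\log qT\le\beta\le1-1/R_0\log qT$ and $1\le|\gamma|\le T$ according to which $\lambda_n$ it corresponds to, then applying the pointwise bound $\mu(\lambda)$ from Lemma~\ref{lemma:mu} together with the monotonicity of $\mu$. The first observation is that $\mu(\lambda)$ is (weakly) decreasing in $\lambda$: larger $\lambda$ means $\beta$ further from $1$, hence a smaller contribution. So if a zero has parameter $\lambda\ge\lambda'$ we may replace its contribution by $\mu(\lambda')$. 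We also use that the total count of zeroes in question over all $\chi\pmod q$ is what Lemma~\ref{lemma:LW} controls through the $\lambda_n$: the $n$-th smallest such parameter $\lambda_n$ satisfies $\lambda_n\ge\varrho_n$ for $qT\ge10^{11}$.

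The key steps, in order, are as follows. First, fix an index $i\in\{1,\dots,6\}$ and use Lemma~\ref{lemma:LW} to assert $\lambda_1\le\eta_i$ and $\lambda_2>\xi_{i+1}$ (this is why $i$ ranges only up to $6$, matching Table~\ref{LW1}). Second, split the zeroes into: the single zero realising $\lambda_1$, whose contribution is at most $\mu(\eta_i)$ using that $\mu$ is decreasing and $\lambda_1\le\eta_i$; then all remaining zeroes have $\lambda\ge\lambda_2>\xi_{i+1}$. Third, among those remaining zeroes, peel off the next $J$ groups using the finer data: for $j=1,\dots,J$ there are at most $M_j$ zeroes with parameter in a window starting near $\nu_j$ (this is the content of Table~\ref{LW3}), each contributing at most $\mu(\max\{\xi_{i+1},\nu_j\})$; everything not yet accounted for has $\lambda$ at least $\max\{\xi_{i+1},\nu_{J+1}\}=1/R_{i,J}$, so it simply does not lie in $z_1(\chi,T,R)$ for that choice of $R$ and contributes nothing. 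Fourth, sum these bounds, note the leading factor $q$ coming from summing over the $\varphi(q)\le q$ characters (and the extra factor $2$ accounting for $\pm\gamma$), and finally optimise over $J\in\{0,\dots,12\}$ to get the $\min$ in the statement. A short check that the hypothesis $qT\ge10^{11}$ is met follows from $q\le\log^{\alpha_1}x$, $T=\log^{\alpha}x$ and the lower bound on $x$.

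The main obstacle I anticipate is the bookkeeping in the third step: correctly matching the windows $[\nu_j,\nu_{j+1})$ (or whatever the precise intervals in Table~\ref{LW3} are) with the multiplicity bounds $M_j$ so that no zero is double-counted and none is missed, and verifying that the ``otherwise'' branch of $\mu$ in~\eqref{eq:3ways} is the relevant one for the zeroes with $|\gamma|>1$ in the retained range (i.e.\ that $\exp\sqrt{\log x/R_0}\ge(qT)^{1/R_0\lambda}$ fails for the parameters that matter, or handling both cases via $\mu(\lambda)$ which already takes the max of the two). Everything else — the monotonicity of $\mu$, the factor $q$, the $\pm\gamma$ doubling — is routine. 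I would present the argument by fixing $i$ and $J$, writing the zeroes of $\prod(s,q)$ with $1\le\gamma\le T$ in increasing order of $\lambda$ as in the paragraph preceding the corollary, and then reading off the bound term by term.
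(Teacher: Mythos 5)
Your overall strategy is the one the paper intends (the paper offers no more detail than ``use \eqref{eq:3ways} and Lemma~\ref{lemma:LW}''): order the zeros of $\prod(s,q)$ with $1\le\gamma\le T$ by increasing $\lambda$, bound each contribution via Lemma~\ref{lemma:mu}, control the number of zeros with small $\lambda$ by Tables~\ref{LW1}--\ref{LW3}, discard the zeros with $\lambda\ge\max\{\xi_{i+1},\nu_{J+1}\}=1/R_{i,J}$ because they belong to $z_0$ rather than $z_1$, double for the conjugate zeros, and minimise over $J$. However, your handling of the first zero is a step that fails as written, and it is the only genuinely non-routine point of the corollary. You correctly state that $\mu$ is non-increasing in $\lambda$, so bounding a zero's contribution by $\mu(\eta_i)$ requires a \emph{lower} bound $\lambda_1\ge\eta_i$; yet you then ``fix $i$'' and deduce the bound $\mu(\eta_i)$ from $\lambda_1\le\eta_i$, which with decreasing $\mu$ gives the reverse inequality $\mu(\lambda_1)\ge\mu(\eta_i)$. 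The quantifier ``for some $i$'' in the statement exists precisely to repair this: the Liu--Wang input is a repulsion dichotomy (if $\lambda_1\le\eta_{i+1}$ then $\lambda_2>\xi_{i+1}$; the paper's own wording of Lemma~\ref{lemma:LW}, ``$\lambda_1\le\eta_i$ and $\lambda_2>\xi_i$ for each $i$'', is garbled on this point), and one must choose $i$ adapted to the actual position of $\lambda_1$, say $\eta_i<\lambda_1\le\eta_{i+1}$, using $\lambda_1>\eta_i$ to get $\mu(\lambda_1)\le\mu(\eta_i)$ and $\lambda_1\le\eta_{i+1}$ to trigger $\lambda_2>\xi_{i+1}$, with the boundary situations ($\lambda_1$ below $\eta_1$ or above $\eta_7=\xi_7$) treated separately. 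Fixing $i$ in advance and asserting both inequalities for it is not something Lemma~\ref{lemma:LW} delivers, and it leaves the term $\mu(\eta_i)$ unjustified.

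Two smaller corrections. The factor $q$ does not come from ``summing over the $\varphi(q)\le q$ characters'': the ordering $\lambda_1\le\lambda_2\le\cdots$ already runs over all zeros of $\prod(s,q)$, i.e.\ over all $\chi\pmod q$ at once, so an extra factor $q$ is merely harmless slack (and note $\mu$ itself already carries a factor $q$); if instead you applied the $\lambda_n$-bounds character by character and then multiplied by $q$, that would be an illegitimate use of Liu--Wang, whose estimates concern the product. Finally, your worry about which branch of \eqref{eq:3ways} applies is resolved by noting that the second branch equals the maximum of the third branch (attained at $\lambda=\log qT/\sqrt{R_0\log x}$, by AM--GM), so the Lemma~\ref{lemma:mu} bound for $|\gamma|>1$ is genuinely non-increasing in $\lambda$ across the case boundary; $\mu$ is defined piecewise by a condition, not as a maximum, so this one-line check should be made explicit since your argument leans on the monotonicity repeatedly.
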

 \begin{table}[H]
 \begin{center}
\setlength\tabcolsep{3pt}
\begin{tabular}{ |c||c|c|c|c|c|c|c|c|c|c|c|c|c|}
\hline
 $j$ & 1 & 2&3&4&5&6& 7 &8&9&10&11&12&13\\ 
\hline
 $\nu_j$ & 0.26213 & 0.27&0.30&0.32&0.33&0.36& 0.39 &0.42&0.45&0.46&0.47&0.475&0.478\\ 
 \hline
  $M_j$ &1  &1 &1& 1& 4& 7&47&57&55&186&502&6166&- \\  
\hline              
\end{tabular}
\end{center}
\caption{Sizes $\nu_j$ and $M_j$. \label{LW3}}
\end{table}
By Theorem \ref{theo:kadiri}, Theorem \ref{the:func}, \eqref{eq:s0} and \eqref{eq:E1} we can now prove Theorem \ref{theo:1}, with
\begin{equation}
\label{eq:C}
C(\alpha_1, \alpha_2, Y_0) = \max_{x\ge Y_0} \max_{q\le \log^{\alpha_1}x} \Big(\frac{R^*(x,T,q)T}{x\log^2 x}+ \left(\frac{1}{\pi}\log \frac{q}{2\pi e}+ r_1(q,1)\right) \cdot 
\end{equation}
\begin{equation*}
\cdot \frac{x^{-1/R_0\log q}}{1-1/R_0\log q}+q \log^{\alpha_2} x\max_{1\le i \le 6}\min_{0 \le J \le 12} \Big( \frac{1}{2}  S(T,q)\cdot
 \end{equation*}
\begin{equation*}
\cdot x^{-1/R_{i,J}\log qT}+2   \Big( \mu(\eta_i)+\mu(\xi_{i+1})+\sum_{j=1}^JM_j\mu(\max\{\xi_{i+1},\nu_j \})\Big)\Big).
\end{equation*}
Note that $qT \ge 10^{11}$, with our choice of $T$, holds for $ \log \log x \ge \frac{11 \log 10}{\alpha_1+\alpha_2+3}$.
\section{Proof of Theorem \ref{theo:2}}
\label{sec:theo2}
Let $\chi^*$ be the primitive character modulo $q*$, that induces $\chi$ modulo $q$, then it is easy to see that
\begin{equation*}
|\psi(x,\chi)-\psi(x,\chi*)|\le \log^2 qx,
\end{equation*}
 and, noting that $\psi(x, \chi_{0}^*)=\psi(x)$, 
 \begin{equation*}
 \left| \psi(x,q,a)-\frac{\psi(x)}{\varphi(q)}\right|\le \frac{1}{\varphi(q)} \left| \sum_{\chi \pmod q, \chi \neq \chi_0}\psi(x,\chi*)\right|+\log^2 qx.
 \end{equation*}
 Hence we easily obtain
 \begin{equation*}
 \sum_{q\le Q, q_0 \nmid q}\left| \psi(x,q,a)-\frac{\psi(x)}{\varphi(q)}\right|
 \end{equation*}
  \begin{equation*}
  \le Q\log^2Qx+\sum_{q\le Q, q_0 \nmid q}\frac{1}{\varphi(q)} \left| \sum_{\chi \pmod q, \chi \neq \chi_0}\psi(x,\chi^*)\right|
 \end{equation*}
  \begin{equation*}
\le Q\log^2Qx+\left(\sum_{1\le m\le Q}\frac{1}{\varphi(m)}\right) \sum_{1<q\le Q, q_0 \nmid q}\frac{1}{\varphi(q)}\left| \sum_{\chi \pmod q}^*\psi(x,\chi)\right|,
 \end{equation*}
 where $\sum_{\chi \pmod q}^*$ denotes the sum over all primitive characters $\chi \pmod q$. By Theorem A.17 in \cite{Nathanson}, we further obtain
 \begin{equation}
 \label{eq:sum}
\le Q\log^2Qx+\frac{c_1}{2}\log x \sum_{1<q\le Q, q_0 \nmid q}\frac{1}{\varphi(q)}\left| \sum_{\chi \pmod q}^*\psi(x,\chi)\right|.
 \end{equation}
 We now split the sum in \eqref{eq:sum} at $Q_1=\log^A x$. We start bounding the sum up to $Q_1$, by Theorem A.17 in \cite{Nathanson}, with
 \begin{equation*}
 \sum_{1<q\le Q_1, q_0 \nmid q}\frac{1}{\varphi(q)}\left| \sum_{\chi \pmod q}^*\psi(x,\chi)\right|
 \end{equation*}
  \begin{equation*}
  \le c_1(1+A\log \log x)\max_{1<q\le Q_1, q_0 \nmid q}\left| \sum_{\chi \pmod q}^*\psi(x,\chi)\right|.
 \end{equation*}
 We can now see that
 \begin{equation*}
 \sum_{\chi \pmod q}^*\psi(x,\chi)\le x\left| -1+\frac{1}{x}\sum_{\chi \pmod q}^*\psi(x,\chi) \right|+x\left|\frac{\psi(x,\chi_0)}{x}-1 \right|
 \end{equation*}
  \begin{equation*}
\le x\left| -1+\frac{1}{x}\sum_{\chi \pmod q}^*\psi(x,\chi) \right|+x\left|\frac{\psi(x)}{x}-1 \right|+\frac{\log q}{\log 2}
 \end{equation*}
 and, by Theorem \ref{theo:1}, \cite{Trudgian2} and $ \log \log x \ge \max \left\{7, \frac{11 \log 10}{2A} \right\}$, we have
 \begin{equation}
 \label{eq:Q1}
 \le \frac{C( A,A-3,X_0)x}{\log^{A-3}x}+\frac{x^{\beta_0}}{\beta_0}+34x(\log x)^{1.52}\exp \left(-0.81\sqrt{\log x} \right) +\frac{\log q}{\log 2}.
 \end{equation}
 Since $q_0\nmid q$ implies $q\le Q_1$ is not exceptional, by Theorem \ref{theo:kadiri}, we have
 \begin{equation}
 \label{eq:Q11}
\frac{x^{\beta_0}}{\beta_0}\le \frac{x^{1-\frac{1}{2AR_1 \log \log x}}}{1-\frac{1}{2AR_1 \log \log x}}
 \end{equation}
 We now want to bound the part of the sum in \eqref{eq:sum} from $Q_1$ to $Q$. We can easily do this as, by Theorem 1.2 in \cite{Amir} and partial summation formula, we obtain
 \begin{equation}
 \label{eq:Q2}
 \sum_{Q_1\le q\le Q}\frac{1}{\varphi(q)}\left| \sum_{\chi \pmod q}^*\psi(x,\chi)\right|\le \frac{4c_0 x\log ^{\frac{7}{2}}x}{Q_1}+
\end{equation}
\begin{equation*}
+c_0(\log x)^{\frac{7}{2}}\Big(4\sqrt{x}\log^A x+4\frac{x}{\log^A x} +18\frac{x^{\frac{11}{12}}}{\log ^{\frac{A}{2}}x}+5x^{\frac{5}{6}}+\frac{5}{2}x^{\frac{5}{6}}\log x\Big).
\end{equation*}
Now Theorem \ref{theo:2} follows from \eqref{eq:sum}, \eqref{eq:Q1}, \eqref{eq:Q11} and \eqref{eq:Q2}.

\section*{Acknowledgements}
I would like to thank my supervisor Tim Trudgian for his help in developing this paper and his insightful comments.

\begin{table}[H] 
\small
\caption{Upper bound for $C(\alpha_1,\alpha_2,Y_0)$}  
\label{tab:cn} 
    \begin{tabular}{ | l | l | l | l | }
    \hline
     $Y_0$ & $\alpha_1$ & $\alpha_2$ & $C $ \\  
    \hline
     $5.6 $ & $1$& $1$ & $11.34 $ \\
     $5.7 $ & $1$ & $1$ & $0.89 $ \\
     $7.4$ & $1$ & $1$ & $1.6 \cdot 10^{-5}$\\
     \hline
     $6.2 $ & $1$ & $2$ & $37.39 $ \\
     $6.3 $ & $1$ & $2$ & $0.83 $ \\
     $7.5 $ & $1$ & $2$ & $1.3 \cdot 10^{-5} $ \\
     \hline
     $ 6.7$ & $1$ & $3$ & $40.2 $ \\
     $6.8 $ & $1$ & $3$ & $0.21 $ \\
     $7.6 $ & $1$ & $3$ & $1.1 \cdot 10^{-5} $ \\
     \hline    
     $7.1 $ & $1$ & $4$ & $80.84 $ \\
     \hline
    \end{tabular}
    \quad
    \begin{tabular}{ | l | l | l | l |  }
    \hline
     $Y_0$ & $\alpha_1$ & $\alpha_2$ & $C  $ \\  
    \hline
     $7.2 $ & $1$& $4$ & $9.3 \cdot 10^{-2} $ \\
     $7.6 $ & $1$ & $4$ & $1.1 \cdot 10^{-5}$ \\
     \hline
     $7.5 $ & $1$ & $5$ & $1.42 $ \\
     $7.6 $ & $1$ & $5$ & $2.2\cdot 10^{-4} $ \\
     $7.7 $ & $1$ & $5$ & $9\cdot 10^{-6} $ \\
     \hline
     $7.8 $ & $1$ & $6$ & $1 $ \\
     $7.9 $ & $1$ & $6$ & $3.1 \cdot 10^{-5} $ \\
     $8 $ & $1$ & $6$ & $5.1 \cdot 10^{-6} $ \\
     \hline
     $8.1 $ & $1$ & $7$ & $1.2\cdot 10^{-2} $ \\
     $8.2 $ & $1$ & $7$ & 3.5$\cdot 10^{-6} $ \\
     \hline
    \end{tabular}
    \quad
    \begin{tabular}{ | l | l | l | l |  }
    \hline
     $Y_0$ & $\alpha_1$ & $\alpha_2$ & $C  $ \\  
    \hline
     $8.3 $ & $1$ & $7$ & $2.9 \cdot 10^{-6} $ \\
     \hline
     $8.3 $ & $1$ & $8$ & $0.92 $ \\
     $8.4 $ & $1$ & $8$ & $3\cdot 10^{-6} $ \\
     $8.5 $ & $1$ & $8$ & $2 \cdot 10^{-6} $ \\
     \hline
     $8.5 $ & $1$ & $9$ & $8.85 $ \\
     $8.6 $ & $1$ & $9$ & $2.9\cdot 10^{-6} $ \\
     $8.7 $ & $1$ & $9$ & $1.4\cdot 10^{-6} $ \\
     \hline
     $8.7 $ & $1$ & $10$ & $7.1 $ \\
     $8.8 $ & $1$ & $10$ & $1.3\cdot 10^{-6} $ \\
     $8.9 $ & $1$ & $10$ & $8.9\cdot 10^{-7} $ \\
     \hline
    \end{tabular}
    \quad
      
\end{table}
\begin{table}[H]
    \small
    \begin{tabular}{ | l | l | l | l |  }
    \hline
     $Y_0$ & $\alpha_1$ & $\alpha_2$ & $C  $ \\  
    \hline
     $6.4 $ & $2$ & $1$ & $34.77 $ \\
     $6.5 $ & $2$ & $1$ & $0.66 $ \\
     $6.9 $ & $2$ & $1$ & $4.3\cdot 10^{-5} $ \\
     \hline
     $6.9 $ & $2$ & $2$ & $7.54 $ \\
     $7 $ & $2$ & $2$ & $2.7\cdot 10^{-2} $ \\
     $7.3 $ & $2$ & $2$ & $2\cdot 10^{-5} $ \\
     \hline
     $7.3 $ & $2$ & $3$ & $2.21 $ \\
     $7.4 $ & $2$ & $3$ & $1.4\cdot 10^{-3} $ \\
     $7.6 $ & $2$ & $3$ & $1.1\cdot 10^{-5} $ \\
     \hline
     $7.6 $ & $2$ & $4$ & $17.1 $ \\
     $7.7 $ & $2$ & $4$ & $2.6 \cdot 10^{-3} $ \\
     $7.8 $ & $2$ & $4$ & $7.7 \cdot 10^{-6} $ \\
     \hline
     $7.9 $ & $2$ & $5$ & $5.32 $ \\
     $8 $ & $2$ & $5$ & $1.4 \cdot 10^{-4} $ \\
     $8.1 $ & $2$ & $5$ & $4.2 \cdot 10^{-6} $ \\
     \hline
     $ 8.2$ & $2$ & $6$ & $2.3\cdot 10^{-2} $ \\
     $8.3 $ & $2$ & $6$ & $2.9\cdot 10^{-6} $ \\
     $8.4 $ & $2$ & $6$ & $2.4\cdot 10^{-6} $ \\
     \hline
     $8.4 $ & $2$ & $7$ & $0.64 $ \\
     $8.5 $ & $2$ & $7$ & $2.3\cdot 10^{-6} $ \\
     $8.6 $ & $2$ & $7$ & $1.6\cdot 10^{-6} $ \\
     \hline
     $8.6 $ & $2$ & $8$ & $1.95 $ \\
     $8.7 $ & $2$ & $8$ & $1.5\cdot 10^{-6} $ \\
     $8.8 $ & $2$ & $8$ & $1.1\cdot 10^{-6} $ \\
     \hline
     $8.8 $ & $2$ & $9$ & $0.44 $ \\
     $8.9 $ & $2$ & $9$ & $9\cdot 10^{-7} $ \\
     $9 $ & $2$ & $9$ & $7.4\cdot 10^{-7} $ \\
     \hline
     $ 9$ & $2$ & $10$ & $4.1\cdot 10^{-3} $ \\
     $ 9.1$ & $2$ & $10$ & $6.1\cdot 10^{-7} $ \\
     $ 9.2$ & $2$ & $10$ & $5\cdot 10^{-7} $ \\
     \hline
    \end{tabular}
    \quad
    \begin{tabular}{ | l | l | l | l |  }
    \hline
     $Y_0$ & $\alpha_1$ & $\alpha_2$ & $C  $ \\  
    \hline
     $7.1 $ & $3$ & $1$ & $0.4 $ \\
     $7.2 $ & $3$ & $1$ & $8.5\cdot 10^{-4} $ \\
     $7.3 $ & $3$ & $1$ & $2.1\cdot 10^{-5} $ \\
     \hline
     $7.4 $ & $3$ & $2$ & $10.4 $ \\
     $7.5 $ & $3$ & $2$ & $1.3\cdot 10^{-2} $ \\
     $7.6 $ & $3$ & $2$ & $1.5\cdot 10^{-5} $ \\
     \hline
     $7.7 $ & $3$ & $3$ & $78.39 $ \\
     $7.8 $ & $3$ & $3$ & $1.2\cdot 10^{-2} $ \\
     $7.9 $ & $3$ & $3$ & $6.85\cdot 10^{-6} $ \\
     \hline
     $8 $ & $3$ & $4$ & $10.83 $ \\
     $8.1 $ & $3$ & $4$ & $2.4\cdot 10^{-4} $ \\
     $8.2 $ & $3$ & $4$ & $3.5\cdot 10^{-6} $ \\
     \hline
     $ 8.3$ & $3$ & $5$ & $1.7\cdot 10^{-2} $ \\
     $8.4 $ & $3$ & $5$ & $2.39\cdot 10^{-6} $ \\
     $8.5 $ & $3$ & $5$ & $2\cdot 10^{-6} $ \\
     \hline
     $8.5 $ & $3$ & $6$ & $0.17 $ \\
     $8.6 $ & $3$ & $6$ & $1.7\cdot 10^{-6} $ \\
     $8.7 $ & $3$ & $6$ & $1.3\cdot 10^{-6} $ \\
     \hline
     $8.7 $ & $3$ & $7$ & $0.16 $ \\
     $8.8 $ & $3$ & $7$ & $1.1\cdot 10^{-6} $ \\
     $8.9 $ & $3$ & $7$ & $9 \cdot 10^{-7} $ \\
     \hline
     $8.9 $ & $3$ & $8$ & $9.1\cdot 10^{-3} $ \\
     $9 $ & $3$ & $8$ & $7.4\cdot 10^{-7} $ \\
     $9.1 $ & $3$ & $8$ & $6.1\cdot 10^{-7} $ \\
     \hline
     $9.1 $ & $3$ & $9$ & $1.9\cdot 10^{-5} $ \\
     $9.2 $ & $3$ & $9$ & $5.1\cdot 10^{-7} $ \\
     $9.3 $ & $3$ & $9$ & $4.2\cdot 10^{-7} $ \\
     \hline
     $ 9.2$ & $3$ & $10$ & $2.73 $ \\
     $ 9.3$ & $3$ & $10$ & $4.2\cdot 10^{-7} $ \\
     $ 9.4$ & $3$ & $10$ & $ 3.5\cdot 10^{-7}$ \\
     \hline
    \end{tabular}
    \quad
     \begin{tabular}{ | l | l | l | l | }
    \hline
     $Y_0$ & $\alpha_1$ & $\alpha_2$ & $C $ \\  
    \hline
     $7.6 $ & $4$ & $1$ & $4.9\cdot 10^{-2}$ \\
     $7.7 $ & $4$ & $1$ & $2.4\cdot 10^{-5} $ \\
     $7.8 $ & $4$ & $1$ & $7.6\cdot 10^{-6} $ \\
     \hline
     $ 7.9$ & $4$ & $2$ & $2.2\cdot 10^{-2} $ \\
     $ 8$ & $4$ & $2$ & $6.3\cdot 10^{-6} $ \\
     $ 8.1$ & $4$ & $2$ & $4.3\cdot 10^{-6} $ \\
     \hline
     $ 8.1$ & $4$ & $3$ & $9.5 $ \\
     $ 8.2$ & $4$ & $3$ & $1.8\cdot 10^{-4} $ \\
     $ 8.3$ & $4$ & $3$ & $2.9\cdot 10^{-6} $ \\
     \hline
     $ 8.4$ & $4$ & $4$ & $4.7\cdot 10^{-3} $ \\
     $ 8.5$ & $4$ & $4$ & $2\cdot 10^{-6} $ \\
     $ 8.6$ & $4$ & $4$ & $1.7\cdot 10^{-6} $ \\
     \hline
     $ 8.6$ & $4$ & $5$ & $1.7\cdot 10^{-2} $ \\
     $ 8.7$ & $4$ & $5$ & $1.4\cdot 10^{-6} $ \\
     $ 8.8$ & $4$ & $5$ & $1.1\cdot 10^{-6} $ \\
     \hline
     $ 8.8$ & $4$ & $6$ & $4.8\cdot 10^{-3} $ \\
     $8.9 $ & $4$ & $6$ & $9.1\cdot 10^{-7} $ \\
     $9 $ & $4$ & $6$ & $7.4 \cdot 10^{-7}$ \\
     \hline
     $ 9$ & $4$ & $7$ & $6.9\cdot 10^{-5} $ \\
     $ 9.1$ & $4$ & $7$ & $6.2\cdot 10^{-7} $ \\
     $ 9.2$ & $4$ & $7$ & $5.1\cdot 10^{-7} $ \\
     \hline
     $ 9.1$ & $4$ & $8$ & $15.59 $ \\
     $ 9.2$ & $4$ & $8$ & $5.4\cdot 10^{-7} $ \\
     $ 9.3$ & $4$ & $8$ & $4.2\cdot 10^{-7} $ \\
     \hline
     $ 9.3$ & $4$ & $9$ & $1.6\cdot 10^{-3} $ \\
     $ 9.4$ & $4$ & $9$ & $3.5\cdot 10^{-7} $ \\
     $ 9.5$ & $4$ & $9$ & $2.9\cdot 10^{-7} $ \\
     \hline
     $ 9.4$ & $4$ & $10$ & $42.5 $ \\
     $ 9.5$ & $4$ & $10$ & $2.9\cdot 10^{-7} $ \\
     $ 9.6$ & $4$ & $10$ & $2.4\cdot 10^{-7} $ \\
     \hline
    \end{tabular}
    \quad 
\end{table}
\begin{table}[H] 
    \small
    \begin{tabular}{ | l | l | l | l |  }
    \hline
     $Y_0$ & $\alpha_1$ & $\alpha_2$ & $C  $ \\  
    \hline
     $ 8$ & $5$ & $1$ & $ 1.9\cdot 10^{-2} $ \\
     $ 8.1$ & $5$ & $1$ & $5.1 \cdot 10^{-6} $ \\
     $ 8.2$ & $5$ & $1$ & $3.5\cdot 10^{-6} $ \\
     \hline
     $ 8.2$ & $5$ & $2$ & $3.87 $ \\
     $ 8.3$ & $5$ & $2$ & $5.6\cdot 10^{-5} $ \\
     $ 8.4$ & $5$ & $2$ & $2.4\cdot 10^{-6} $ \\
     \hline
     $ 8.5$ & $5$ & $3$ & $5.8\cdot 10^{-4} $ \\
     $ 8.6$ & $5$ & $3$ & $1.7 \cdot 10^{-6}$ \\
     $ 8.7$ & $5$ & $3$ & $1.4 \cdot 10^{-6}$ \\
     \hline
     $ 8.7$ & $5$ & $4$ & $6.9\cdot 10^{-4} $ \\
     $ 8.8$ & $5$ & $4$ & $1.1 \cdot 10^{-6}$ \\
     $ 8.9$ & $5$ & $4$ & $9.1\cdot 10^{-7} $ \\
     \hline
     $ 8.9$ & $5$ & $5$ & $5.7\cdot 10^{-5} $ \\
     $ 9$ & $5$ & $5$ & $7.5 \cdot 10^{-7}$ \\
     $ 9.1$ & $5$ & $5$ & $6.2 \cdot 10^{-7}$ \\
     \hline
     $ 9$ & $5$ & $6$ & $18.27 $ \\
     $ 9.1$ & $5$ & $6$ & $8\cdot 10^{-7} $ \\
     $ 9.2$ & $5$ & $6$ & $5.1\cdot 10^{-7} $ \\
     \hline
     $ 9.2$ & $5$ & $7$ & $1.9\cdot 10^{-2} $ \\
     $ 9.3$ & $5$ & $7$ & $ 4.2\cdot 10^{-7}$ \\
     $ 9.4$ & $5$ & $7$ & $ 3.5\cdot 10^{-7}$ \\
     \hline
     $ 9.3$ & $5$ & $8$ & $924 $ \\
     $ 9.4$ & $5$ & $8$ & $6.2\cdot 10^{-7} $ \\
     $ 9.5$ & $5$ & $8$ & $2.9\cdot 10^{-7} $ \\
     \hline
     $ 9.5$ & $5$ & $9$ & $2.8\cdot 10^{-3} $ \\
     $ 9.6$ & $5$ & $9$ & $2.4\cdot 10^{-7} $ \\
     $ 9.7$ & $5$ & $9$ & $2\cdot 10^{-7} $ \\
     \hline
     $ 9.6$ & $5$ & $10$ & $11.93 $ \\
     $ 9.7$ & $5$ & $10$ & $1.6\cdot 10^{-7} $ \\
     $ 9.8$ & $5$ & $10$ & $1.4\cdot 10^{-7} $ \\
     \hline
    \end{tabular}
    \quad
    \begin{tabular}{ | l | l | l | l |  }
    \hline
     $Y_0$ & $\alpha_1$ & $\alpha_2$ & $C  $ \\  
    \hline
     $ 8.3$ & $6$ & $1$ & $0.78 $ \\
     $ 8.4$ & $6$ & $1$ & $1\cdot 10^{-6}$ \\
     $ 8.5$ & $6$ & $1$ & $2\cdot 10^{-6} $ \\
     \hline
     $8.5 $ & $6$ & $2$ & $13.9 $ \\
     $ 8.6$ & $6$ & $2$ & $3.3 \cdot 10^{-5}$ \\
     $8.7 $ & $6$ & $2$ & $1.4 \cdot 10^{-6}$ \\
     \hline
     $8.7 $ & $6$ & $3$ & $30.58 $ \\
     $ 8.8$ & $6$ & $3$ & $1.4\cdot 10^{-5} $ \\
     $ 8.9$ & $6$ & $3$ & $1\cdot 10^{-6} $ \\
     \hline
     $8.9 $ & $6$ & $4$ & $5.1 $ \\
     $9 $ & $6$ & $4$ & $1.1\cdot 10^{-6} $ \\
     $9.1 $ & $6$ & $4$ & $6.1\cdot 10^{-7} $ \\
     \hline
     $9.1 $ & $6$ & $5$ & $4\cdot 10^{-2} $ \\
     $9.2 $ & $6$ & $5$ & $5.1\cdot 10^{-7} $ \\
     $9.3 $ & $6$ & $5$ & $4.2\cdot 10^{-7} $ \\
     \hline
     $9.2 $ & $6$ & $6$ & $3471 $ \\
     $ 9.3$ & $6$ & $6$ & $7.7\cdot 10^{-6} $ \\
     $ 9.4$ & $6$ & $6$ & $3.5\cdot 10^{-7} $ \\
     \hline
     $ 9.4$ & $6$ & $7$ & $0.16 $ \\
     $9.5 $ & $6$ & $7$ & $2.9\cdot 10^{-7} $ \\
     $9.6 $ & $6$ & $7$ & $2.4\cdot 10^{-7} $ \\
     \hline
     $ 9.5$ & $6$ & $8$ & $1498 $ \\
     $9.6 $ & $6$ & $8$ & $2.9 \cdot 10^{-7} $ \\
     $9.7 $ & $6$ & $8$ & $2\cdot 10^{-7} $ \\
     \hline
     $9.7 $ & $6$ & $9$ & $7.4\cdot 10^{-5} $ \\
     $9.8 $ & $6$ & $9$ & $1.6\cdot 10^{-7} $ \\
     $9.9 $ & $6$ & $9$ & $1.4\cdot 10^{-7} $ \\
     \hline
     $9.8 $ & $6$ & $10$ & $3.9\cdot 10^{-2} $ \\
     $9.9 $ & $6$ & $10$ & $1.4\cdot 10^{-7} $ \\
     $10 $ & $6$ & $10$ & $1.1\cdot 10^{-7} $ \\
     \hline
    \end{tabular}
    \quad
     \begin{tabular}{ | l | l | l | l | }
    \hline
     $Y_0$ & $\alpha_1$ & $\alpha_2$ & $C $ \\  
    \hline
     $8.6 $ & $7$ & $1$ & $0.57$ \\
     $8.7 $ & $7$ & $1$ & $2.2\cdot 10^{-6} $ \\
     $8.8 $ & $7$ & $1$ & $1.2\cdot 10^{-6} $ \\
     \hline
     $8.8 $ & $7$ & $2$ & $0.43 $ \\
     $8.9 $ & $7$ & $2$ & $1.1\cdot 10^{-6} $ \\
     $9 $ & $7$ & $2$ & $7.5\cdot 10^{-7} $ \\
     \hline
     $9 $ & $7$ & $3$ & $2\cdot 10^{-2} $ \\
     $9.1 $ & $7$ & $3$ & $6.2\cdot 10^{-7} $ \\
     $9.2 $ & $7$ & $3$ & $5.1\cdot 10^{-7} $ \\
     \hline
     $9.2 $ & $7$ & $4$ & $3.4\cdot 10^{-5} $ \\
     $9.3 $ & $7$ & $4$ & $4.2\cdot 10^{-7} $ \\
     $9.4 $ & $7$ & $4$ & $3.5\cdot 10^{-7} $ \\
     \hline
     $9.3 $ & $7$ & $5$ & $1.3 $ \\
     $9.4 $ & $7$ & $5$ & $3.5\cdot 10^{-7} $ \\
     $9.5 $ & $7$ & $5$ & $2.9\cdot 10^{-7} $ \\
     \hline
     $9.5 $ & $7$ & $6$ & $8.2\cdot 10^{-6} $ \\
     $9.6 $ & $7$ & $6$ & $2.4\cdot 10^{-7} $ \\
     $9.7 $ & $7$ & $6$ & $2\cdot 10^{-7} $ \\
     \hline
     $9.6 $ & $7$ & $7$ & $2.9\cdot 10^{-2} $ \\
     $9.7 $ & $7$ & $7$ & $2\cdot 10^{-7} $ \\
     $9.8 $ & $7$ & $7$ & $1.6\cdot 10^{-7} $ \\
     \hline
     $9.7 $ & $7$ & $8$ & $44.51 $ \\
     $9.8 $ & $7$ & $8$ & $1.6\cdot 10^{-7} $ \\
     $9.9 $ & $7$ & $8$ & $1.4\cdot 10^{-7} $ \\
     \hline
     $9.9 $ & $7$ & $9$ & $1.5\cdot 10^{-7} $ \\
     $10 $ & $7$ & $9$ & $1.1\cdot 10^{-7} $ \\
     $10.1 $ & $7$ & $9$ & $8.9\cdot 10^{-8} $ \\
     \hline
     $10 $ & $7$ & $10$ & $9.1\cdot 10^{-7} $ \\
     $10.1 $ & $7$ & $10$ & $9\cdot 10^{-8} $ \\
     $10.2 $ & $7$ & $10$ & $7.4\cdot 10^{-8} $ \\
     \hline
    \end{tabular}
    \quad 
\end{table}

\begin{table}[H] 
    \small
    \begin{tabular}{ | l | l | l | l |  }
    \hline
     $Y_0$ & $\alpha_1$ & $\alpha_2$ & $C  $ \\  
    \hline
     $8.9 $ & $8$ & $1$ & $2.7\cdot 10^{-3} $ \\
     $9 $ & $8$ & $1$ & $7.6\cdot 10^{-7} $ \\
     $9.1 $ & $8$ & $1$ & $6.2\cdot 10^{-7} $ \\
     \hline
     $9.1 $ & $8$ & $2$ & $3.3\cdot 10^{-5} $ \\
     $9.2 $ & $8$ & $2$ & $5.2\cdot 10^{-7} $ \\
     $9.3 $ & $8$ & $2$ & $4.2\cdot 10^{-7} $ \\
     \hline
     $9.2 $ & $8$ & $3$ & $2.11 $ \\
     $9.3 $ & $8$ & $3$ & $4.4\cdot 10^{-7} $ \\
     $9.4 $ & $8$ & $3$ & $3.5\cdot 10^{-7} $ \\
     \hline
     $ 9.4$ & $8$ & $4$ & $1.7\cdot 10^{-4} $ \\
     $ 9.5$ & $8$ & $4$ & $2.9\cdot 10^{-7} $ \\
     $ 9.6$ & $8$ & $4$ & $ 2.4\cdot 10^{-7}$ \\
     \hline
     $ 9.5$ & $8$ & $5$ & $1.43 $ \\
     $9.6 $ & $8$ & $5$ & $2.4\cdot 10^{-7} $ \\
     $9.7 $ & $8$ & $5$ & $2\cdot 10^{-7} $ \\
     \hline
     $9.7 $ & $8$ & $6$ & $3.6\cdot 10^{-7} $ \\
     $9.8 $ & $8$ & $6$ & $1.6\cdot 10^{-7} $ \\
     $9.9 $ & $8$ & $6$ & $1.4\cdot 10^{-7} $ \\
     \hline
     $9.8 $ & $8$ & $7$ & $7.9\cdot 10^{-5} $ \\
     $9.9 $ & $8$ & $7$ & $1.4\cdot 10^{-7} $ \\
     $10 $ & $8$ & $7$ & $1.1\cdot 10^{-7} $ \\
     \hline
     $ 9.9$ & $8$ & $8$ & $1.4\cdot 10^{-2} $ \\
     $10 $ & $8$ & $8$ & $1.1\cdot 10^{-7} $ \\
     $10.1 $ & $8$ & $8$ & $9\cdot 10^{-8} $ \\
     \hline
     $10 $ & $8$ & $9$ & $0.82 $ \\
     $10.1 $ & $8$ & $9$ & $9\cdot 10^{-8} $ \\
     $10.2 $ & $8$ & $9$ & $7.4\cdot 10^{-8} $ \\
     \hline
     $10.1 $ & $8$ & $10$ & $14.03 $ \\
     $10.2 $ & $8$ & $10$ & $7.4\cdot 10^{-8} $ \\
     $10.3 $ & $8$ & $10$ & $6.1\cdot 10^{-8} $ \\
     \hline
    \end{tabular}
    \quad
    \begin{tabular}{ | l | l | l | l |  }
    \hline
     $Y_0$ & $\alpha_1$ & $\alpha_2$ & $C  $ \\  
    \hline
     $9.1 $ & $9$ & $1$ & $0.78 $ \\
     $9.2 $ & $9$ & $1$ & $5.4\cdot 10^{-7} $ \\
     $9.3 $ & $9$ & $1$ & $4.3\cdot 10^{-7} $ \\
     \hline
     $9.3 $ & $9$ & $2$ & $6.3\cdot 10^{-4} $ \\
     $9.4 $ & $9$ & $2$ & $3.5\cdot 10^{-7} $ \\
     $9.5 $ & $9$ & $2$ & $2.9\cdot 10^{-7} $ \\
     \hline
     $9.4 $ & $9$ & $3$ & $10.85 $ \\
     $9.5 $ & $9$ & $3$ & $3\cdot 10^{-7} $ \\
     $9.6 $ & $9$ & $3$ & $2.4\cdot 10^{-7} $ \\
     \hline
     $9.6 $ & $9$ & $4$ & $2.5\cdot 10^{-5} $ \\
     $9.7 $ & $9$ & $4$ & $2\cdot 10^{-7} $ \\
     $9.8 $ & $9$ & $4$ & $1.6\cdot 10^{-7} $ \\
     \hline
     $ 9.7$ & $9$ & $5$ & $3.5\cdot 10^{-2} $ \\
     $9.8 $ & $9$ & $5$ & $1.7\cdot 10^{-7} $ \\
     $9.9 $ & $9$ & $5$ & $1.4\cdot 10^{-7} $ \\
     \hline
     $9.8 $ & $9$ & $6$ & $19.97 $ \\
     $9.9 $ & $9$ & $6$ & $1.4\cdot 10^{-7} $ \\
     $10 $ & $9$ & $6$ & $ 1.1\cdot 10^{-7}$ \\
     \hline
     $10 $ & $9$ & $7$ & $1.1\cdot 10^{-7} $ \\
     $10.1 $ & $9$ & $7$ & $9\cdot 10^{-8} $ \\
     $10.2 $ & $9$ & $7$ & $7.4\cdot 10^{-8} $ \\
     \hline
     $10.1 $ & $9$ & $8$ & $1.2\cdot 10^{-7} $ \\
     $10.2 $ & $9$ & $8$ & $7.4\cdot 10^{-8} $ \\
     $10.3 $ & $9$ & $8$ & $6.2\cdot 10^{-8} $ \\
     \hline
     $10.2 $ & $9$ & $9$ & $1.6\cdot 10^{-7} $ \\
     $10.3 $ & $9$ & $9$ & $6.1\cdot 10^{-8} $ \\
     $10.4 $ & $9$ & $9$ & $5\cdot 10^{-8} $ \\
     \hline
     $10.3 $ & $9$ & $10$ & $1.4\cdot 10^{-7} $ \\
     $10.4 $ & $9$ & $10$ & $5.1\cdot 10^{-8} $ \\
     $10.5 $ & $9$ & $10$ & $4.2\cdot 10^{-8} $ \\
     \hline
    \end{tabular}
    \quad
    \begin{tabular}{ | l | l | l | l | }
    \hline
     $Y_0$ & $\alpha_1$ & $\alpha_2$ & $C $ \\  
    \hline
     $ 9.3$ & $10$ & $1$ & $15.61 $ \\
     $9.4 $ & $10$ & $1$ & $4.2\cdot 10^{-7} $ \\
     $9.5 $ & $10$ & $1$ & $2.9\cdot 10^{-7} $ \\
     \hline
     $9.5 $ & $10$ & $2$ & $5.2\cdot 10^{-4} $ \\
     $9.6 $ & $10$ & $2$ & $2.4\cdot 10^{-7} $ \\
     $9.7 $ & $10$ & $2$ & $2\cdot 10^{-7} $ \\
     \hline
     $9.6 $ & $10$ & $3$ & $1.84 $ \\
     $9.7 $ & $10$ & $3$ & $2\cdot 10^{-7} $ \\
     $9.8 $ & $10$ & $3$ & $1.7\cdot 10^{-7} $ \\
     \hline
     $9.8 $ & $10$ & $4$ & $2.2\cdot 10^{-7} $ \\
     $9.9 $ & $10$ & $4$ & $1.4\cdot 10^{-7} $ \\
     $10 $ & $10$ & $4$ & $1.1\cdot 10^{-7} $ \\
     \hline
     $9.9 $ & $10$ & $5$ & $1.1\cdot 10^{-5} $ \\
     $10 $ & $10$ & $5$ & $1.1\cdot 10^{-7} $ \\
     $10.1 $ & $10$ & $5$ & $9 \cdot 10^{-8} $ \\
     \hline
     $10 $ & $10$ & $6$ & $6.3\cdot 10^{-4} $ \\
     $10.1 $ & $10$ & $6$ & $9\cdot 10^{-7} $ \\
     $10.2 $ & $10$ & $6$ & $7.4\cdot 10^{-7} $ \\
     \hline
     $10.1 $ & $10$ & $7$ & $1.2\cdot 10^{-2} $ \\
     $10.2 $ & $10$ & $7$ & $7.4\cdot 10^{-8} $ \\
     $10.3 $ & $10$ & $7$ & $6.1\cdot 10^{-8} $ \\
     \hline
     $10.2 $ & $10$ & $8$ & $6\cdot 10^{-2}  $ \\
     $10.3 $ & $10$ & $8$ & $6.1\cdot 10^{-8}  $ \\
     $10.4 $ & $10$ & $8$ & $5.1\cdot 10^{-8}  $ \\
     \hline
     $10.3 $ & $10$ & $9$ & $7.4\cdot 10^{-2}  $ \\
     $10.4 $ & $10$ & $9$ & $5.1\cdot 10^{-8}  $ \\
     $10.5 $ & $10$ & $9$ & $4.2\cdot 10^{-8}  $ \\
     \hline
     $10.4 $ & $10$ & $10$ & $2\cdot 10^{-2}  $ \\
     $10.5 $ & $10$ & $10$ & $4.2\cdot 10^{-8}  $ \\
     $10.6 $ & $10$ & $10$ & $3.4\cdot 10^{-8}  $ \\
     \hline
    \end{tabular}
    \quad
      
\end{table}

\end{document}